\theoremstyle{definition}
\newtheorem{definition}{Definition}
\theoremstyle{plain}
\newtheorem{theorem}[definition]{Theorem}
\newtheorem{proposition}[definition]{Proposition}
\newtheorem{lemma}[definition]{Lemma}
\newtheorem{conjecture}[definition]{Conjecture}
\newtheorem{corollary}[definition]{Corollary}
\newtheorem{remark}[definition]{Remark}
\newtheorem{property}[definition]{Property}
\newtheorem{question}[definition]{Question}
\def\be{\begin{equation}}
\def\ee{\end{equation}}
\def\bp{\begin{property}}
\def\ep{\end{property}}
\def\bbP{{\mathbb P}}
\def\eee12{\frac{1}{\sqrt{r+\frac{1}{12}}}}
\def\ep{\varepsilon({\cal O}_{\bbP^2}(1), P_1,...P_r)}
\begin{document}

\title{\sc Prime numbers with a certain extremal type property}

\author{Edward Tutaj}

\thanks{Keywords: prime numbers, prime counting function, Riemann hypothesis }

\subjclass{11N05}

\begin{abstract}
The convex hull of the subgraph of the prime counting function
$x\rightarrow \pi(x)$ is a convex set, bounded from above by a
graph of some piecewise affine function $x\rightarrow
\epsilon(x)$. The vertices of this function form an infinite
sequence of points $(e_k,\pi(e_k))_1^{\infty}$. In this paper we
present some trivial observation about the sequence
$(e_k)_1^{\infty}$ and we formulate a number of questions
resulting from the numerical data. Besides we prove one less
trivial result: {\it if the Riemann hypothesis is true, then
$\lim\frac{e_{k+1}}{e_k}=1$.}

\end{abstract}

\maketitle

\section{Introduction}

Prime numbers are generators of the multiplicative semigroup
${\mathbb N}^{*}$ (where ${\mathbb
N}^{*}=\left\{1,2,3,...\right\}$). It is well known, that it is
impossible to distinguish two different prime numbers using only
the "language of multiplication". If one wants to distinguish some
particular prime number from the others, one must consider an
additional structure in ${\mathbb N}^{*}$, like for example  the
natural order in ${\mathbb N}$. The prime counting function is an
example of  such order properties. In this paper we define a
property of prime numbers with respect to their position on the
graph of the prime counting function $x\longrightarrow \pi(x)$.

Some properties related to the graph of the function $\pi$ were
studied se\-veral years ago in 1979 by Carl Pommerance \cite{Pomm}
and recently (2006) by H.L. Montgomery and S.Wagon \cite{MonWa} in
considerations concerning the Prime Number Theorem (PNT for
short).

Let $\mathbb P$ denote the sequence of prime numbers, i.e.
$\mathbb P = \left\{2,3,5,7,11,...\right\}$. Usually one defines
the function $\pi:[2,\infty)\longrightarrow [1,\infty)$ by the
formula

\be\label{wzor na pi} \pi(x)= \sum_{p\in \mathbb P, p\leq x}
1.
\ee

 For our purposes it will be a little more convenient to
consider a function $\pi^{*}:[2,\infty)\longrightarrow [1,\infty)$
defined as follows. First we define a continuous function $\eta:
[1,\infty)\longrightarrow [2,\infty)$ setting: $\eta(n)= p_n$,
where $p_n$ is the $n-th$ prime number, and $\eta$ is affine (and
continuous) in the intervals $[n,n+1]$ for each $n\in \mathbb N$.
Obviously $\eta$ is strictly increasing, continuous and
surjective. Thus $\eta$ is invertible and we define $\pi^{*}$ as
the inverse of $\eta$. Let $[x]$ denote the integral part of the
real number $x$. One can easily check, that $\pi$ and $\pi^{*}$
have the same values at prime numbers, and that

\be\label{wzor na pi*} \pi(x)= [(\pi^{*}(x))].
\ee
 \vspace{5mm}

\section{Part I}

\subsection{Definition of extremal primes.}

\vspace{3mm}

The function $\pi^{*}$ is increasing, continuous, but it is
"visibly" not concave. However there are many concave functions
$\varphi: [2,\infty)\longrightarrow [1,\infty)$, such that for
each $x\in [2,\infty)$ we have $\varphi(x)\geq \pi^{*}(x)$. This
follows for example from the Chebyshev  theorem, which gives the
inequality
\be\label{Czebyszew}
A\cdot\frac{x}{\ln(x)}<\pi(x)<B\cdot\frac{x}{\ln(x)}
\ee

for some $A<1$ and $B>1$, (obviously $\frac{x}{\ln x }$ is a
concave function).

\vspace{2mm}

Let us consider the set

\be
\Omega = \left\{f:[2,\infty)\longrightarrow [1,\infty): f\geq
\pi^{*}, f - {\rm  concave}\right\},
\ee and let us observe, although this will  play no role in our
consideration, that $\Omega$ is a subset of the vector cone of all
positive and concave real functions on $[2,\infty)$.

We put for $x\in [2,\infty)$

\be
\epsilon(x)= \inf\left\{f(x): f\in \Omega\right\},
\ee i.e. the function $\epsilon$ is the lower envelope of the
family $\Omega$. In other words the function $\epsilon$ is the
smallest concave function, which is greater than $\pi^{*}$
(equivalently than $\pi$). Since $\pi^{*}$ is piecewise affine,
then $\epsilon$ is also the lower envelope of those functions from
$\Omega$, which are piecewise affine. Then it is clear, that the
function $\epsilon$ is concave and it is also piecewise affine.
Thus the set

\be
\Gamma=\left\{(x,y)\in \mathbb R^{2}:x\in[2,\infty), 0\leq
y\leq \epsilon(x)\right\}
\ee is a convex set. Let us recall, that if $U$ is a convex set
and $b\in U$, then $b$ is said to be an {\it extremal point } of
$U$ iff $b$ is not an interior point of any non-trivial segment
lying in $U$.

Now we are ready to formulate the following:

\vspace{3mm}

\begin{definition}\label{definicja liczb ekstremalnych}
 {\it The prime number $p\in \mathbb P$ will be said to be extremal prime
number, when the point $(p,\pi(p))$ is an extremal point of the
convex set $\Gamma$}.
\end{definition}

\subsection{Properties of the set of extremal primes}

Let $\mathbb E$ denote the set of all extremal primes. Sometimes
we will think rather about the sequence of extremal primes
$\mathbb E = \left\{e_1,e_2,...,\right\},$ where $e_1<e_2<e_3...$,
i.e. the sequence $(e_k)_1^{\infty}$ is strictly increasing.

\vspace{3mm}

Now we will present some easy properties of the set $\mathbb E$.

\vspace{3mm}

\begin{proposition}
{\it The set $\mathbb E$ is not empty.}
\end{proposition}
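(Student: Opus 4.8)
The plan is to prove nonemptiness by exhibiting a single explicit extremal prime, namely $p=2$; that is, I would show that the point $(2,\pi(2))=(2,1)$ is an extremal point of $\Gamma$, so that $2\in\mathbb E$ and hence $\mathbb E\neq\emptyset$.

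First I would pin down the value $\epsilon(2)$. Since every $f\in\Omega$ satisfies $f\geq\pi^{*}$, we immediately get $\epsilon(2)\geq\pi^{*}(2)=1$. For the reverse inequality I would use the description of $\epsilon$ as the concave hull of $\pi^{*}$: because $\pi^{*}$ is the piecewise-affine interpolation of the nodes $(p_n,n)$, its smallest concave majorant is governed by the upper convex hull of the point set $\{(p_n,n):n\geq 1\}$. The leftmost of these nodes is $(p_1,1)=(2,1)$, and being the unique point of minimal abscissa it is automatically a vertex of that upper hull; hence $\epsilon(2)=1$ and $(2,1)$ lies on the graph of $\epsilon$.

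Next I would verify that the top-left corner $(2,\epsilon(2))=(2,1)$ is genuinely an extremal point of $\Gamma$. Suppose it were interior to a nontrivial segment $[a,b]\subseteq\Gamma$. Every point of $\Gamma$ has abscissa at least $2$, and $(2,1)$ attains this minimum; an interior point of a segment can attain the minimal abscissa only if both endpoints $a,b$ share that abscissa, so the segment must be vertical, contained in $\{2\}\times[0,\epsilon(2)]=\{2\}\times[0,1]$. But then $(2,1)$ could not be interior to it, since its ordinate $1=\epsilon(2)$ is the top of that edge. This contradiction shows $(2,1)$ is extremal, so $2\in\mathbb E$.

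The only genuinely non-formal step is the identification $\epsilon(2)=1$, and I expect this to require the most care. I would obtain it by checking that the concave hull $H$ of the node set $\{(p_n,n)\}$ is itself a member of $\Omega$: it is concave by construction, it dominates $\pi^{*}$ because on each interval $[p_n,p_{n+1}]$ the affine piece of $\pi^{*}$ is the chord through $(p_n,n)$ and $(p_{n+1},n+1)$, which lies below the chord of $H$ through $(p_n,H(p_n))$ and $(p_{n+1},H(p_{n+1}))$ (the endpoints having been raised to the hull), and the latter chord lies below the concave $H$; and $H(2)=1$ since $(2,1)$ is the leftmost node. Then $1\leq\epsilon(2)\leq H(2)=1$. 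Confirming both that $H\geq\pi^{*}$ everywhere and that the infimum defining $\epsilon$ really coincides with this concave hull on the unbounded domain $[2,\infty)$ is the part of the argument I would write out in full detail.
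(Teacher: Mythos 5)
Your proposal is correct and takes essentially the same approach as the paper, which offers no details beyond the assertion that ``it is easy to check that $2\in\mathbb{E}$''; your argument is precisely that check, written out (the value $\epsilon(2)=1$, then extremality of $(2,1)$ via minimality of the abscissa). As a small simplification, $\epsilon(2)=1$ follows at once from the single affine majorant $f(x)=x-1$, which lies above $\pi^{*}$ because $p_n\geq n+1$ at every node $(p_n,n)$, so the concave-hull construction $H$ can be bypassed entirely.
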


\vspace{3mm}

Indeed, it is easy to check, that $2\in \mathbb E$. \vspace{2mm}

\begin{proposition}
 {\it The set ${\mathbb N}^{*}\setminus \mathbb E$ is not empty.}
\end{proposition}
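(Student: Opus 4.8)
The plan is to exhibit a single explicit prime that fails to be extremal, which immediately shows $\mathbb N^{*}\setminus \mathbb E\neq\emptyset$. The natural candidate is $p=5$, and the strategy is to realize the point $(5,\pi(5))$ as an interior point of a genuine segment contained in $\Gamma$; by the notion of extremal point recalled just before Definition \ref{definicja liczb ekstremalnych}, this rules $5$ out of $\mathbb E$.

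First I would record the three relevant values $\pi(3)=2$, $\pi(5)=3$ and $\pi(7)=4$, so that the points in question are $(3,2)$, $(5,3)$ and $(7,4)$. Since these are consecutive primes with equal gaps $5-3=7-5=2$, the middle point is exactly the midpoint of the outer two:
\[
(5,3)=\tfrac12\big((3,2)+(7,4)\big).
\]
Thus $(5,3)$ lies in the interior of the segment $S$ joining $(3,2)$ and $(7,4)$.

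Next I would check that $S\subseteq\Gamma$. Because $\epsilon\geq\pi^{*}$ everywhere and $\pi^{*}(3)=\pi(3)=2$, $\pi^{*}(7)=\pi(7)=4$, we have $\epsilon(3)\geq 2$ and $\epsilon(7)\geq 4$; hence both endpoints $(3,2)$ and $(7,4)$ belong to $\Gamma$. Since $\Gamma$ is convex, the whole segment $S$, and in particular its interior point $(5,3)$, lies in $\Gamma$. Consequently $(5,3)$ is an interior point of a non-trivial segment contained in $\Gamma$, so it is not an extremal point of $\Gamma$; therefore $5\notin\mathbb E$ while $5\in\mathbb N^{*}$, and the set $\mathbb N^{*}\setminus\mathbb E$ is non-empty.

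The argument is essentially a one-point computation, so there is no serious obstacle; the only point requiring care is the verification that $S$ genuinely lies inside $\Gamma$ (not merely that its endpoints do), which is handled by the convexity of $\Gamma$, together with the non-degeneracy of $S$ that makes $(5,3)$ a true interior point. I would also remark that the claim holds \emph{a fortiori} because every composite number already lies in $\mathbb N^{*}\setminus\mathbb E$, but the content of genuine interest is precisely that $\mathbb E$ is a proper subset of the primes, which the example $p=5$ demonstrates.
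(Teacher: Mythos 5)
Your proposal is correct and takes essentially the same route as the paper, which simply asserts that $5\notin\mathbb E$ (while $3,7\in\mathbb E$) and leaves the verification to the reader; your midpoint argument $(5,3)=\tfrac12\bigl((3,2)+(7,4)\bigr)$ together with the convexity of $\Gamma$ is exactly the check the paper has in mind. The only difference is that you supply the details explicitly, which is a welcome completion rather than a departure.
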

\vspace{3mm}

One can check, that $3\in \mathbb E$, $7\in \mathbb E$, but
$5\notin \mathbb E$.

\vspace{3mm}

\begin{proposition}\label{rekurencja}
 {\it The set $\mathbb E$ is infinite}.
\end{proposition}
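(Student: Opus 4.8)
The plan is to show that the extremal primes cannot be finite in number by exploiting the concavity of $\epsilon$. Suppose, for contradiction, that $\mathbb{E}$ were finite, say $\mathbb{E}=\{e_1,\dots,e_N\}$ with $e_N$ the largest extremal prime. Since $\epsilon$ is piecewise affine and concave, its graph over $[2,\infty)$ is a concave polygonal arc whose vertices are precisely the extremal points $(e_k,\pi(e_k))$. If there were only finitely many vertices, then beyond the last one the graph of $\epsilon$ would be a single affine ray: there would be a real number $a$ (the final slope) and a constant $b$ such that $\epsilon(x)=ax+b$ for all $x\geq e_N$. Because $\epsilon\geq \pi^{*}\geq \pi$ everywhere, this would force $\pi(x)\leq ax+b$ for all sufficiently large $x$, i.e. a genuinely linear upper bound on the prime counting function.

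The key step is then to rule out any linear upper bound for $\pi$. First I would observe that a finite final slope $a$ must in fact be strictly positive (concavity together with the fact that $\pi^{*}\to\infty$ prevents the ray from being horizontal or decreasing), but any fixed positive slope is still too generous at finite scales, so this alone is not yet a contradiction. The real obstacle, and the heart of the argument, is to produce arbitrarily large primes $p$ whose points $(p,\pi(p))$ lie strictly above any prescribed line $x\mapsto ax+b$, contradicting $\epsilon(x)=ax+b\geq \pi(x)$. Equivalently I want to show that $\pi(x)-ax$ is unbounded above for every fixed $a>0$. This follows from the Prime Number Theorem: since $\pi(x)\sim x/\ln x$, we have $\pi(x)/x\sim 1/\ln x\to 0$, so $\pi(x)-ax = x\bigl(\pi(x)/x - a\bigr)\to -\infty$ — which, I note, goes the \emph{wrong} way and shows a linear bound with positive slope is eventually satisfied, not violated.

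This sign check reveals that the contradiction must be sought not from a positive slope but from the structure of concavity itself: a finite set of vertices forces the \emph{last} affine piece to have some slope $a\ge 0$, and concavity forces every earlier slope to be at least $a$; in particular the slopes form a nonincreasing sequence. The clean contradiction comes from comparing the asymptotic growth rate of $\pi^{*}$ with a single affine function. Since $\pi^{*}(x)\to\infty$, the terminal slope cannot be zero, so $a>0$; but then $\epsilon(x)=ax+b$ would give $\pi^{*}(x)\leq ax+b$ for all large $x$, whereas the inverse relation $\eta=(\pi^{*})^{-1}$ satisfies $\eta(n)=p_n$ and $p_n/n\to\infty$ (again by the Prime Number Theorem, $p_n\sim n\ln n$). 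Thus $\pi^{*}(x)=o(x)$, which contradicts $\pi^{*}(x)\geq (x-b)/a$ for large $x$. Hence no terminal affine piece can bound $\pi^{*}$ from above, so the concave arc must change slope infinitely often, producing infinitely many vertices and therefore infinitely many extremal primes.

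The main obstacle I expect is purely one of careful bookkeeping: ensuring that each vertex of the concave graph genuinely corresponds to an \emph{extremal prime} in the sense of Definition~\ref{definicja liczb ekstremalnych}, rather than to some non-prime integer abscissa, and confirming that a finite vertex set really does force a single terminal affine ray. Both points are handled by the piecewise-affine concave description of $\epsilon$ already established in the text, but they deserve explicit verification; the analytic input reduces entirely to the elementary fact $\pi^{*}(x)=o(x)$, which is weaker than the full Prime Number Theorem and in fact follows already from the Chebyshev bounds~(\ref{Czebyszew}) cited above.
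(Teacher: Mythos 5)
Your reduction of the problem is sound up to a point: if $\mathbb{E}$ were finite with largest element $e_N$, then $\epsilon$, being concave and piecewise affine with vertices exactly at the extremal points, would coincide with an affine ray $ax+b$ on $[e_N,\infty)$, and $a>0$ because $\epsilon\geq\pi^{*}\to\infty$. The fatal step is the last one. From $\pi^{*}(x)\leq ax+b$ you claim a contradiction with the lower bound $\pi^{*}(x)\geq (x-b)/a$, but that lower bound was never derived and is false. Inverting $\pi^{*}(x)\leq ax+b$ through the increasing function $\eta=(\pi^{*})^{-1}$ gives $\eta(y)\geq (y-b)/a$, i.e. $p_n\geq (n-b)/a$ --- a \emph{true} statement, entirely consistent with $\pi^{*}(x)=o(x)$. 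You have turned an upper bound on $\pi^{*}$ into a lower bound on $\pi^{*}$ instead of a lower bound on its inverse. More fundamentally, sublinearity of $\pi^{*}$ makes a linear upper bound with positive slope \emph{easier} to satisfy, not impossible; your concluding assertion that ``no terminal affine piece can bound $\pi^{*}$ from above'' is plainly false (for instance $\pi^{*}(x)\leq x$ for all $x\geq 2$). So no contradiction has been reached: a concave, piecewise affine function with finitely many vertices and a terminal ray of positive slope \emph{can} majorize $\pi^{*}$.

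What your argument never uses --- and what any contradiction must come from --- is that $\epsilon$ is the \emph{least} element of the family $\Omega$, not merely some concave majorant. The point is not that a terminal ray cannot lie above $\pi^{*}$, but that it cannot do so \emph{minimally}: since $\pi^{*}(x)=o(x)$ (which indeed follows from (\ref{Czebyszew})), a ray of slope $a>0$ eventually lies far above the graph, and one can lower it. Concretely, consider the difference quotients $I_N(p)=\frac{\pi(p)-\pi(e_N)}{p-e_N}$ over primes $p>e_N$; they are positive and tend to $0$ by (\ref{Legendre}), so their supremum is attained and only by finitely many primes. Letting $q$ be the largest maximizer, one checks that the slopes from $(q,\pi(q))$ to all later prime points are strictly smaller than the maximum, so replacing the part of $\epsilon$ beyond $q$ by the ray from $(q,\pi(q))$ of this strictly smaller slope produces a concave majorant of $\pi^{*}$ lying strictly below $\epsilon$ far to the right --- contradicting that $\epsilon$ is the lower envelope of $\Omega$, and in fact exhibiting $q$ as an extremal prime beyond $e_N$. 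This maximizing-slope step is exactly the content of the paper's proof of Proposition \ref{rekurencja}, which is not a proof by contradiction at all but a direct recursion: $e_{k+1}$ is the largest prime maximizing $I_k(p)$ over primes $p>e_k$. Your proposal is missing precisely this ingredient; the asymptotics $\pi^{*}(x)=o(x)$ alone cannot finish the proof.
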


\vspace{3mm}

\begin{proof}
  Let $l_k$ denote the straight line (the affine function)
passing through the points $(e_{k-1},\pi(e_{k-1}))$ and
$(e_k,\pi(e_k))$. It follows from Definition 1 that the graph of
the function $\epsilon$ lies below the line $l_k$. This gives a
simple inductive method of finding the next extremal prime
$e_{k+1}$ providing, that we know $e_1, e_2,..., e_{k-1}, e_k$ (in
fact it is sufficient to know only $e_{k-1}$ and $e_k$). We can do
it as follows. We consider the difference quotients of the form

\be\label{iloraz1}
I_k(p)= \frac{\pi(p) - \pi(e_k)}{p-e_k},
\ee

for $p\in \mathbb P, p>e_k.$ It follows from the remark made
above, that for each $p>e_k$ we have: \be\label{iloraz2}
0<I_k(p)<\frac{\pi(e_k)-\pi(e_{k-1})}{e_k-e_{k-1}}=I_{k-1}(e_k).
\ee

Using the commonly known fact

\be\label{Legendre}
 \lim_{p\rightarrow\infty}\frac{\pi(p)}{p}=0
\ee we have
 $\lim_{p\rightarrow\infty} I_k(p)=0$.
Then there exists a finite set $\mathbb P_k\subset \mathbb P$ of
primes, such that $p_o\in \mathbb P_k \Longrightarrow p_o>e_k$ and
such that $I_k(p)\leq I_k(p_o)$ for $p>e_k$.  We set then
$e_{k+1}=\max {\mathbb {P}_k}$. This implies, that the set
$\mathbb E$ is infinite.

\end{proof}

\vspace{3mm}
\begin{proposition}\label{pochodna pi-e}

 {\it The derivative $x\longrightarrow {\epsilon}'(x)$ is
strictly decreasing and tends to 0 at infinity.}

\end{proposition}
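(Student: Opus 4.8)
The plan is to exploit that, by Definition \ref{definicja liczb ekstremalnych} and the construction of $\epsilon$, the function $\epsilon$ is piecewise affine and concave: on each interval $(e_{k-1},e_k)$ it coincides with the segment $l_k$, so its derivative is a non-increasing step function taking there the constant value
\[
s_k = I_{k-1}(e_k) = \frac{\pi(e_k)-\pi(e_{k-1})}{e_k-e_{k-1}}.
\]
Thus the assertion that $\epsilon'$ is \emph{strictly decreasing} is to be read as the statement that the successive slopes satisfy $s_{k+1}<s_k$, i.e.\ every extremal prime is a genuine vertex of the graph (the left and right derivatives at $e_k$ differ). First I would observe that this is exactly inequality \ref{iloraz2}: substituting $p=e_{k+1}$ there yields $0<I_k(e_{k+1})<I_{k-1}(e_k)$, that is $0<s_{k+1}<s_k$. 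In particular all slopes are positive, so $\epsilon'>0$, and $(s_k)$ is a strictly decreasing positive sequence; this settles the first half and shows that $L:=\lim_{x\to\infty}\epsilon'(x)=\lim_k s_k\ge 0$ exists.

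For the second half I would prove $L=0$ by comparing $\epsilon$ with a concave majorant of sublinear growth. Using the Chebyshev upper bound \ref{Czebyszew}, namely $\pi(x)<B\,\frac{x}{\ln x}$, together with $\pi^{*}(x)<\pi(x)+1$, one constructs a function $\varphi$ that is concave on all of $[2,\infty)$, satisfies $\varphi\ge\pi^{*}$, and has $\varphi(x)/x\to 0$. Concretely one takes $\varphi$ to be a suitable multiple of $x/\ln x$ (which is concave for $x>e^{2}$, since there its second derivative $\tfrac{2-\ln x}{x(\ln x)^{3}}$ is negative) for large $x$, completed by an affine piece on the initial segment and shifted by a constant large enough to dominate the bounded function $\pi^{*}$ on the compact part.

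Finally, since $\epsilon$ is by definition the least concave majorant of $\pi^{*}$, we have $\epsilon\le\varphi$. Fixing any $x_0$ and using concavity, for $x>x_0$ the right derivative is bounded by the secant slope, so that
\[
0<\epsilon'(x)\le\frac{\epsilon(x)-\epsilon(x_0)}{x-x_0}\le\frac{\varphi(x)}{x-x_0},
\]
where the last step uses $\epsilon(x_0)>0$ (as $\pi^{*}\ge 1$) and $\epsilon(x)\le\varphi(x)$. Letting $x\to\infty$ with $x_0$ fixed, the right-hand side tends to $0$ because $\varphi(x)/x\to 0$, whence $\epsilon'(x)\to 0$ and $L=0$.

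I expect the only real obstacle to be the construction of the global concave majorant $\varphi$ of sublinear growth, in particular arranging concavity across the junction with the initial affine piece while keeping $\varphi\ge\pi^{*}$; everything else follows directly from \ref{iloraz2} and \ref{Czebyszew}. One could alternatively avoid Chebyshev and argue only from \ref{Legendre}, but exhibiting an explicit concave majorant seems cleaner.
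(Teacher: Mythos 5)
Your proof is correct. The first half coincides with the paper's: the paper defines the slopes $\delta_k=\frac{\pi(e_{k+1})-\pi(e_k)}{e_{k+1}-e_k}$, identifies them with the derivative of $\epsilon$, and notes they are positive and strictly decreasing (the strictness being, as you say, exactly the content of inequality (\ref{iloraz2})); your $s_k$ is just $\delta_{k-1}$. For the second half, however, you take a genuinely different route. The paper argues that the limit $\delta=\lim_k\delta_k\geq 0$ exists and concludes $\delta=0$ directly from (\ref{Legendre}), i.e.\ from $\lim_{p\to\infty}\pi(p)/p=0$; the implicit mechanism is that if all slopes were $\geq\delta>0$, then telescoping $\pi(e_{k+1})-\pi(e_k)\geq\delta\,(e_{k+1}-e_k)$ would force $\liminf_k \pi(e_k)/e_k\geq\delta>0$, a contradiction. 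You instead build an explicit globally concave majorant $\varphi$ of sublinear growth out of Chebyshev's bound (\ref{Czebyszew}), invoke the minimality of $\epsilon$ as the least concave majorant to get $\epsilon\leq\varphi$, and finish with the supergradient (secant-slope) estimate $\epsilon'(x)\leq\frac{\epsilon(x)-\epsilon(x_0)}{x-x_0}\leq\frac{\varphi(x)}{x-x_0}\to 0$. Both arguments are sound. The paper's is shorter and needs only $\pi(x)/x\to 0$ rather than a Chebyshev upper bound (though the paper already cites (\ref{Czebyszew}) anyway, so this costs nothing); yours requires the gluing-and-shifting construction of $\varphi$, which you rightly identify as the only real work, but in exchange it makes rigorous and explicit the step the paper leaves implicit (precisely how (\ref{Legendre}) forces the limit to vanish), and it uses only the envelope definition of $\epsilon$ rather than the behaviour of $\pi$ along the extremal sequence itself.
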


\vspace{3mm}

\begin{proof}

Let

\be\label{definicja delta k}
\delta_k=\frac{\pi(e_{k+1})-\pi(e_k)}{e_{k+1}-e_k},
\ee i.e. $\delta_n$ is the slope of the n-th segment lying on the
graph of the function $\epsilon$. Since $\epsilon$ is increasing
and concave, then the sequence $(\delta_k)_{1}^{\infty}$ is
positive and  strictly decreasing. Let us observe, that the
sequence $(\delta_k)_{1}^{\infty}$ may be identified with the
derivative of the function $\epsilon$. Hence the limit $\delta =
\lim_{k\rightarrow \infty} \delta_k \geq 0$ exists and it must be
$\delta=0$, which follows once more from (\ref{Legendre}).

\end{proof}

The number $\alpha_k = {\delta_k}^{-1}$ is a measure of the
density of prime numbers in the interval $[e_k,e_{k+1})$ and may
be interpreted as an {\it average gap} between primes in
$[e_k,e_{k+1})$. By the remark made above, the sequence
$(\alpha_k)_1^{\infty}$ is strictly increasing.

\vspace{3mm}

It is natural to ask now about the cardinality of the set $\mathbb
N \setminus \mathbb E$. We have

\vspace{5mm}

\begin{proposition}\label{Zhang}
 {\it The set $\mathbb N\setminus \mathbb E$ is infinite.}
\end{proposition}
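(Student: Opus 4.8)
The plan is to show that extremality forces a prime to have a large gap to the next prime, and then to exhibit infinitely many primes whose gap to the next prime is small. Since $\mathbb{E}\subset\mathbb{P}\subsetneq\mathbb{N}$, the statement is trivially true if read literally (every composite number lies in $\mathbb{N}\setminus\mathbb{E}$); the substantive content, and the reason Zhang's theorem is relevant here, is that infinitely many \emph{primes} fail to be extremal. I will prove this stronger statement, which immediately yields the proposition.

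First I would record a necessary condition for extremality. Suppose $e_k=p_n$ is extremal, so $\pi(e_k)=n$, and let $p_{n+1}$ be the next prime. The next extremal prime satisfies $e_{k+1}\ge p_{n+1}$, so $p_{n+1}$ lies in the interval $(e_k,e_{k+1}]$, on which the concave envelope $\epsilon$ is affine with slope $\delta_k$. Evaluating the inequality $\epsilon\ge\pi^{*}$ at $x=p_{n+1}$ gives
\be
n+1=\pi(p_{n+1})=\pi^{*}(p_{n+1})\le \epsilon(p_{n+1})=n+\delta_k\,(p_{n+1}-e_k),
\ee
whence $p_{n+1}-e_k\ge \delta_k^{-1}=\alpha_k$. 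In words: the gap from an extremal prime to the following prime is at least the average gap $\alpha_k$. By Proposition \ref{pochodna pi-e} we have $\delta_k\to 0$, hence $\alpha_k\to\infty$, so the gap to the next prime tends to infinity along the sequence of extremal primes.

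Now I would invoke Zhang's theorem on bounded gaps between primes: there is a constant $C$ and infinitely many indices $n$ with $p_{n+1}-p_n\le C$. Fix such a $C$ and let $S$ be this infinite set of primes $p_n$. If some $p_n\in S$ were extremal, say $p_n=e_k$, the inequality above would force $\alpha_k\le p_{n+1}-p_n\le C$, i.e. $\delta_k\ge C^{-1}$; but $\delta_k\to 0$, so this holds for only finitely many $k$, hence for only finitely many members of $S$. Therefore all but finitely many elements of the infinite set $S$ are non-extremal, proving that $\mathbb{P}\setminus\mathbb{E}$, and a fortiori $\mathbb{N}\setminus\mathbb{E}$, is infinite.

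I expect the main obstacle to be conceptual rather than technical: the displayed condition is a one-line consequence of concavity, but the envelope alone only shows that extremal primes are rare, so one must \emph{import} an arithmetic input to actually produce non-extremal primes. Zhang's theorem is the cleanest such input. As a remark I would note that it can be avoided: if $\mathbb{P}\setminus\mathbb{E}$ were finite, then all large primes would be consecutive extremal vertices, forcing the integer gaps $p_{n+1}-p_n$ to be strictly increasing (since $\delta_k$ is strictly decreasing by Proposition \ref{pochodna pi-e}), hence $p_n$ would grow at least quadratically in $n$, contradicting the bound $p_n=O(n\ln n)$ that follows from the Chebyshev estimate (\ref{Czebyszew}). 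I would present the Zhang argument as the main line and record this elementary alternative as a remark.
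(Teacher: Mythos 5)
Your main argument is the paper's own proof, completed. The paper's proof of this proposition is only a sketch: it cites Zhang's bound $\liminf(p_{n+1}-p_n)<7\cdot 10^7$ and asserts that, by Proposition \ref{pochodna pi-e} ($\delta_k\to 0$), this suffices. The step you supply explicitly --- that an extremal prime $e_k=p_n$ must satisfy $p_{n+1}-e_k\ge \delta_k^{-1}=\alpha_k$, obtained by evaluating $\epsilon\ge\pi^{*}$ at the next prime on the affine piece of slope $\delta_k$ --- is exactly the link the paper leaves implicit, and your deduction from Zhang's theorem is then the intended one. You are also right that the statement as literally phrased ($\mathbb N\setminus\mathbb E$ infinite) is trivial, and that the real content is the infinitude of $\mathbb P\setminus\mathbb E$; the paper glosses over this distinction. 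Where you genuinely depart from the paper is your closing remark: if $\mathbb P\setminus\mathbb E$ were finite, then beyond some point consecutive primes would be consecutive extremal primes, so $\delta_k=1/(p_{n+1}-p_n)$ strictly decreasing would force the integer gaps $p_{n+1}-p_n$ to be strictly increasing, giving $p_{n_0+m}\ge p_{n_0}+m+\tfrac{m(m-1)}{2}$, which contradicts $p_n=O(n\ln n)$ from (\ref{Czebyszew}). That argument is correct, entirely elementary, and proves the proposition \emph{without} Zhang's theorem --- something the paper does not do; the trade-off is that Zhang's input yields the sharper qualitative picture (infinitely many primes sit inside lenses of length eventually exceeding any fixed bound), whereas the elementary route only rules out finiteness. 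If anything, your remark deserves to be the main line and the Zhang argument the remark.
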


\begin{proof}

 This is true and is related to study of {\it small gaps between
 primes}. Let us observe only, that the finitness of $\mathbb N\setminus \mathbb E$
 is impossible if the {\it twin primes} conjecture is true. However, we
 know now from the recent result of Zhang, \cite{Zhang} that
  $\liminf(p_{n+1}-p_n)<7\cdot 10^7$. It follows from  Proposition \ref{pochodna pi-e}
  that this is  sufficient for
 the set $\mathbb N\setminus \mathbb E$ to be infinite.

\end{proof}

 It appears, that the set $\mathbb E$ is in some sense minimal with respect to
 Property \ref{pochodna pi-e}. Namely suppose, that $\mathbb G = (g_i)_1^{\infty}$ is a subsequence of
the sequence
  $\mathbb P$ of prime numbers
such that $g_1=2$. Let \be
\delta_k(\mathbb G)=\frac{\pi(g_{k+1})-\pi(g_k)}{g_{k+1}-g_k}.
\ee

We will say, that $\mathbb{G}$ is concave, when
$\delta_k(\mathbb{G})$ is strictly decreasing. For example the
sequence $\mathbb E$ is concave, while the sequence $\mathbb P$ is
not concave. A subsequence of a concave sequence is also concave.
The sequence $\mathbb E$ of extremal primes has the following
property: {\it if $\mathbb E$ is a subsequence  of a concave
sequence $\mathbb G$, then $\mathbb E = \mathbb G$.} More exactly:

\vspace{3mm}

\begin{proposition}\label{minimality}
{\it Let us suppose that  a sequence $(g_k)_{1}^{\infty}$ is
concave and the sequence $\mathbb E$ is a subsequence of $\mathbb
G$. Then $\mathbb E = \mathbb G$.}
\end{proposition}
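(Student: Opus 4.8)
The plan is to argue by contradiction: assume $\mathbb{E}$ is a \emph{proper} subsequence of $\mathbb{G}$ and produce a prime whose point lies strictly above the graph of $\epsilon$, which is impossible since $\pi^{*}\le\epsilon$ everywhere. The geometric picture driving everything is that the points $(e_k,\pi(e_k))$ are exactly the vertices of the convex region $\Gamma$, so each segment joining two consecutive vertices $(e_k,\pi(e_k))$ and $(e_{k+1},\pi(e_{k+1}))$ lies on the graph of $\epsilon$; consequently every prime point $(p,\pi(p))$ with $e_k<p<e_{k+1}$ sits on or below that segment.

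First I would fix notation. Since $e_1=2=g_1$ and $\mathbb{E}\subseteq\mathbb{G}$, if $\mathbb{E}\neq\mathbb{G}$ there is an element $g\in\mathbb{G}\setminus\mathbb{E}$, and it is trapped between two consecutive extremal primes, $e_k<g<e_{k+1}$. I would then list all elements of $\mathbb{G}$ lying in $[e_k,e_{k+1}]$ as $e_k=h_0<h_1<\dots<h_m=e_{k+1}$ with $m\ge 2$; these form a consecutive run of terms of $\mathbb{G}$, so their consecutive slopes $s_i=\frac{\pi(h_i)-\pi(h_{i-1})}{h_i-h_{i-1}}$ are, by concavity of $\mathbb{G}$, strictly decreasing: $s_1>s_2>\dots>s_m$.

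The heart of the argument is a one-line averaging observation. Writing $\delta_k$ for the slope of the edge of $\epsilon$ from $(e_k,\pi(e_k))$ to $(e_{k+1},\pi(e_{k+1}))$, I would note that $\delta_k$ is the convex combination $\delta_k=\frac{\sum_{i=1}^m s_i (h_i-h_{i-1})}{\sum_{i=1}^m (h_i-h_{i-1})}$ of the $s_i$ with strictly positive weights; since the $s_i$ are strictly decreasing and $m\ge 2$, this forces $s_1>\delta_k$. Hence the first intermediate point satisfies $\pi(h_1)=\pi(e_k)+s_1(h_1-e_k)>\pi(e_k)+\delta_k(h_1-e_k)$, i.e. $(h_1,\pi(h_1))$ lies strictly above the edge of $\epsilon$ joining $(e_k,\pi(e_k))$ and $(e_{k+1},\pi(e_{k+1}))$. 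Since $e_k<h_1<e_{k+1}$, that edge is precisely the graph of $\epsilon$ over $h_1$, so $\pi^{*}(h_1)=\pi(h_1)>\epsilon(h_1)$, contradicting $\pi^{*}\le\epsilon$.

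I expect the only real care to be in the second step: checking that the slopes $s_i$ really are consecutive slopes of $\mathbb{G}$ (so that concavity applies verbatim) and that an intermediate point exists, i.e. $m\ge 2$, which is exactly the hypothesis $g\in\mathbb{G}\setminus\mathbb{E}$. Everything after that is the elementary averaging inequality $s_1>\delta_k$. An alternative to the explicit averaging is to invoke that any subsequence of a concave sequence is concave (already noted in the text) together with the fact that a concave polygonal arc lies strictly above the chord through its endpoints at every interior node; I would use whichever reads more cleanly, though the averaging version keeps the proof self-contained.
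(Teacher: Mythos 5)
Your proof is correct, and while it shares the paper's basic skeleton --- restrict attention to the run of elements of $\mathbb{G}$ inside a single interval $[e_k,e_{k+1}]$ and play the strictly decreasing slopes $s_i$ against the edge slope $\delta_k(\mathbb{E})$ --- your key input is genuinely different. The paper argues by induction ($e_i=g_i$ for $i\le k$, then show $e_{k+1}=g_{k+1}$) and gets its contradiction from the \emph{recursive characterization} of $e_{k+1}$ in Proposition \ref{rekurencja}: since $e_{k+1}$ is the largest maximizer of the difference quotients $I_k$, the paper asserts $s_1=I_k(g_{k+1})<\delta_k(\mathbb{E})$, and then the mean value theorem applied to the polygonal interpolant $H$ of $\mathbb{G}$ shows that $\mathbb{G}$ cannot climb from $\pi(e_k)$ up to $\pi(e_{k+1})$. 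You instead use the \emph{envelope characterization}: $\epsilon$ is affine on $[e_k,e_{k+1}]$, $\pi^{*}\le\epsilon$, and your averaging identity expressing $\delta_k(\mathbb{E})$ as a weighted mean of the $s_i$ forces $s_1>\delta_k(\mathbb{E})$, so $(h_1,\pi(h_1))$ lies strictly above $\epsilon$; note your averaging step and the paper's MVT step are the same inequality, with the two contradictions running in opposite directions (the paper makes the first slope of $\mathbb{G}$ too small, you make it too large). Two consequences of this difference are worth recording. First, your strictness comes from concavity of $\mathbb{G}$ alone, which is more robust than the paper's: the strict inequality $I_k(g_{k+1})<\delta_k(\mathbb{E})$ is not justified by the definition of $e_{k+1}$ alone when $g_{k+1}$ is a non-extremal prime realizing the maximal quotient, i.e.\ lying on the edge of $\epsilon$ --- a situation the paper itself records can occur (e.g.\ $I_4(23)=I_4(31)=I_4(43)=I_4(47)=\delta_4$, with $e_4=19$, $e_5=47$) --- whereas your argument disposes of that case with no extra work. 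Second, the price you pay is the unproved (though true) geometric fact underlying your first paragraph: that the points $(e_k,\pi(e_k))$ are precisely the vertices of the graph of $\epsilon$ and that $\epsilon$ is affine between consecutive ones, so that $\epsilon(h_1)=\pi(e_k)+\delta_k(\mathbb{E})(h_1-e_k)$. The paper's route through Proposition \ref{rekurencja} has this built in; to be fully self-contained you should add a line of justification (e.g.\ a vertex of $\epsilon$ at which $\epsilon>\pi^{*}$, or located at a non-prime abscissa, could be shaved off by a chord, contradicting the minimality of the concave envelope).
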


\vspace{3mm}

\begin{proof}

 Clearly $e_1=g_1=2$. Since there are no primes between 2 and 3
and
 $e_2\in \mathbb G$ then also $e_2=g_2 =3$.
Suppose now that $e_i=g_i$ for $1\leq i \leq k$.
 We wish to prove, that $e_{k+1}=g_{k+1}$. Assume then, that $e_{k+1}\neq g_{k+1}$
and that $g_{k+m}=e_{k+1}$ i.e. that $$e_k=g_k <g_{k+1}< g_{k+2}<
...< g_{k+m} = e_{k+1}.$$

Now, using the notations from Proposition \ref{rekurencja} and the
definition of $e_{k+1}$ we have for $i<m$: \be
\delta_{k}(\mathbb G)=I_k(g_{k+1})<\delta_k(\mathbb E)
\ee

 Let us consider a function $H:[e_k,e_{k+1}]\longrightarrow
{\mathbb R}$
 such that $H(g_{k+i})= \pi(g_{k+i})$ and $H$ is affine and continuous in each
interval $[g_{k+i},g_{k+i+1}]$. We see, that the function $H$ is
continuous and differentiable except in the points $x=g_{k+i}$
 and its derivative in the
intervals $ (g_{k+i},g_{k+i+1})$ is constant and equal
$\delta_{k+i}(\mathbb G)$. It follows from our assumptions (since
$\mathbb G$ is concave), that

\be \sup\left\{H^{'}(x): x\in [e_k,e_{k+1}]\right\}=
\delta_{k}({\mathbb G}) < \delta_k(\mathbb E). \ee

 Let us observe, that since the function $H$
is continuous an differentiable except for a finite set of
arguments,  we can apply the mean value theorem. Hence we have:

\begin{eqnarray*}
\pi(e_{k+1}-\pi(e_k)= \pi(g_{k+p})-\pi(g_k)&\leq&
\sup\left\{(H^{'}(x): x\in [e_k,e_{k+1}]\right\}\cdot
(g_{k+p}-g_{k})\\&\leq &\delta_k(\mathbb G)\cdot(e_{k+1}-e_k)<
\delta_k(\mathbb E)\cdot(e_{k+1}-e_k) = \pi(e_{k+1})-\pi(e_k),
\end{eqnarray*}
 but this is impossible and this ends the
proof of  Proposition \ref{minimality}.
\end{proof}

\subsection{Some numerical data and the questions they evoke}

The observations about the extremal primes made above are rather
trivial. We will prove later some deeper, however conditional,
results. We have calculated the first 2200 extremal primes and
after studying these numerical data, we can formulate a number of
more or less interesting questions. It is impossible to give here
the complete list of the first 2200 extremal primes, but we will
present some selected data:

\vspace{3mm}


 The  first twenty eight  terms of the sequence $\mathbb E$ are:

   \begin{center}
   \renewcommand{\arraystretch}{1.2}
   \begin{tabular}{|c|c|c|c|c|c|c|c|c|c|c|c|c|c|c|}
   \hline
   $n$ & 1 & 2 & 3 & 4 & 5 & 6 & 7 & 8 & 9 & 10 & 11 & 12 & 13 & 14  \\
   \hline
   $e_n$ & 2 & 3 & 7 & 19 & 47 & 73 & 113 & 199 & 283 & 467 & 661 & 887 & 1129 & 1329  \\
   \hline
   \end{tabular}
   \end{center}

\begin{center}
   \renewcommand{\arraystretch}{1.2}
   \begin{tabular}{|c|c|c|c|c|c|c|c|c|c|c|c|c|c|c|}
   \hline
   $n$ & 15 & 16 & 17 & 18 & 19 & 20 & 21 & 22 & 23 & 24 & 25 & 26 & 27 & 28  \\
   \hline
   $e_n$ & 1627 & 2803 & 3947 & 4297 & 5881 & 6379 & 7043 & 9949 & 10343 & 13187 & 15823 & 18461 & 24137 & 33647  \\
   \hline
   \end{tabular}
   \end{center}

\vspace{3mm}


 The list of $e_k$ where $k\leq 2200$ and $k\equiv 0 (\mod 100)$:


   \begin{center}
   \renewcommand{\arraystretch}{1.2}
   \begin{tabular}{|c|r|}
   \hline
     $e_{100}$&  5253173  \\
   \hline
 $e_{200}$ & 67596937 \\
   \hline
 $e_{300}$ & 314451367\\

\hline
 $e_{400}$ & 883127303\\

\hline
 $e_{500}$ & 2122481761\\

\hline
 $e_{600}$ & 4205505103\\
\hline
 $e_{700}$ & 7274424463\\

\hline
 $e_{800}$ & 12251434927\\

\hline
 $e_{900}$ & 19505255383\\

\hline
 $e_{1000}$ & 28636137347\\

\hline
 $e_{1100}$ & 40001601779\\
\hline
 $e_{1200}$ & 55036621907\\

\hline
 $e_{1300}$ & 73753659461\\

\hline
 $e_{1400}$ & 97381385771\\
\hline
 $e_{1500}$ & 125232859691\\

\hline
 $e_{1600}$ & 157169830847\\
\hline
 $e_{1700}$ & 196062395777\\

\hline
 $e_{1800}$ & 241861008029\\
\hline
 $e_{1900}$ & 296478801431\\

\hline
 $e_{2000}$ & 365234091199\\
\hline
 $e_{2100}$ & 435006680401\\
\hline
 $e_{2200}$ & 524320812671\\
\hline

   \end{tabular}
   \end{center}

The examination of the sequence of the first 2200  extremal primes
allows us to formulate a number of questions. First of all it
seems to be interesting to say something about the "density" of
the sequence $\mathbb E$. Our "experimental" data support some
conjectures. Namely

\begin{conjecture}
{\it The series
$$\sum_{k=1}^{\infty}\frac{1}{e_k}$$ is convergent.}
\end{conjecture}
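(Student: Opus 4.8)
The plan is to reduce convergence of $\sum_k 1/e_k$ to a power-saving bound on the counting function $N(x)=\#\{k:e_k\le x\}$, and then to attack that bound through the fluctuation $E(x)=\pi(x)-\mathrm{Li}(x)$ under the Riemann hypothesis. By partial summation, $\sum_{e_k\le x}1/e_k = N(x)/x + \int_2^x N(t)/t^2\,dt$, so a bound $N(x)=O(x^{\theta})$ with some fixed $\theta<1$ already forces the series to converge; equivalently it suffices to prove $e_k\gg k^{1+\eta}$ for some $\eta>0$. Thus the whole problem is to show that the extremal primes are \emph{sparse}: only $O(x^{\theta})$ of them lie below $x$.

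First I would pin down the envelope. Since $\mathrm{Li}$ is increasing and concave and, under RH, $|E(x)|=O(\sqrt{x}\,\log x)$, the function $\mathrm{Li}(x)+C\sqrt{x}\,\log x$ is, for $C$ large, a concave majorant of $\pi$; comparing with $\epsilon\ge\pi\ge\mathrm{Li}-|E|$ gives the sandwich $\epsilon(x)=\mathrm{Li}(x)+O(\sqrt{x}\,\log x)$. Next, on each segment $[e_k,e_{k+1}]$ the graph of $\epsilon$ is the chord through $(e_k,\pi(e_k))$ and $(e_{k+1},\pi(e_{k+1}))$; evaluating $\epsilon\ge\pi$ at the midpoint and expanding $\mathrm{Li}$ to second order yields the key local inequality
\[
\frac{(e_{k+1}-e_k)^2}{8\,e_k\,\log^2 e_k}\ \le\ \tfrac12\bigl(E(e_k)+E(e_{k+1})\bigr)-E\!\left(\tfrac{e_k+e_{k+1}}{2}\right)\ \le\ 2\max_{[e_k,e_{k+1}]}|E| .
\]
Combined with the RH bound on $E$ this gives $e_{k+1}-e_k=O\bigl(e_k^{3/4}\log^{3/2}e_k\bigr)$, which in particular re-proves the paper's theorem $e_{k+1}/e_k\to1$ and, heuristically (by balancing the two sides), predicts the numerically observed $e_k\asymp k^{4}$ and $N(x)\asymp x^{1/4}$.

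The heart of the matter is the reverse inequality: an \emph{upper} bound on $N(x)$ needs a \emph{lower} bound on the gaps $e_{k+1}-e_k$, or equivalently on the slope drops $\delta_{k-1}-\delta_k$. One genuine constraint is available for free: by the inequality $I_k(p)\le\delta_{k-1}$ used in Proposition~\ref{rekurencja}, the $j$-th prime after an extremal prime lies beyond $e_k+j/\delta_{k-1}$, so in particular the next prime is already at distance $\ge 1/\delta_{k-1}\approx\log e_k$. In the fluctuation picture each vertex $e_k$ is an \emph{upward record} of $E$ relative to the parabolic budget $\tfrac12|\mathrm{Li}''|\,(\cdot)^2$: between consecutive vertices $E$ must fall back by at least that amount, as the displayed inequality quantifies. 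I would therefore count vertices dyadically on $[y,2y]$, where the total available slope drop is only $\delta_{\sim y}-\delta_{\sim 2y}\approx(\log 2)/\log^2 y$; if one could show each gap there is $\gg y^{3/4-\varepsilon}$ (equivalently each slope drop is $\gg y^{-1/4-\varepsilon}$), then the number of vertices in $[y,2y]$ would be $O(y^{1/4+\varepsilon})$, and summing the dyadic ranges would give $N(x)=O(x^{1/4+\varepsilon})$ and hence convergence.

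The main obstacle is exactly this lower bound on the vertex spacing. The size bound $|E|=O(\sqrt{x}\log x)$ from RH controls how \emph{large} $E$ can be but says nothing about how \emph{often} $E$ can stage a new upward record; a priori the step function $\pi$ could produce many closely spaced vertices if $E$ oscillated with very short upward excursions. Ruling this out seems to require oscillation information about $E$ on short scales that is not contained in RH alone---most naturally, the explicit formula $E(x)\sim-\sum_{\rho}x^{\rho}/(\rho\log x)$ together with control on the vertical distribution of the ordinates $\gamma$ (a pair-correlation- or gap-type input), used to force consecutive upward records of $E$ to be separated by $\gg x^{3/4-\varepsilon}$. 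Establishing such a lower bound on the record spacing is, in my view, the one nontrivial analytic step on which the whole conjecture turns.
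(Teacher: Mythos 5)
First, note what you were up against: this statement is one of the paper's \emph{conjectures}. The author gives no proof at all, only the numerical evidence $\sum_{k\le 2000}1/e_k\approx 1.09$, so there is no argument in the paper to compare yours to, and a complete proof would be new mathematics. Your text, as you yourself say in the last paragraph, is also not a proof but a reduction plus a program. The parts you actually establish are sound: the partial-summation reduction of convergence to $N(x)=O(x^{\theta})$ with $\theta<1$ is correct, and your midpoint--chord inequality (concavity of $\epsilon$ forces the chord over $[e_k,e_{k+1}]$ to dominate $\pi$ at the midpoint, while $\mathrm{Li}$ is uniformly strictly concave there) is a legitimate way to exploit RH. It yields $e_{k+1}-e_k=O(e_k^{3/4}\log^{3/2}e_k)$, and hence re-proves the paper's Theorem 30 ($e_{k+1}/e_k\to 1$ under RH) by a shorter route than the paper's own argument (tangent lines to $\mathrm{Li}-\varepsilon$ and an auxiliary cubic equation), and with an explicit rate. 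That is genuine added value --- but for a different statement than the one you were asked to prove.

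The gap is the one you name, and it is worse than you suggest: the milestone you set is not merely unproven, it is false. Convergence needs sparsity, i.e.\ \emph{lower} bounds on gaps or upper bounds on the count $N(x)$, and RH controls only the size of $E(x)=\pi(x)-\mathrm{Li}(x)$, not how often it can stage a new record; so far we agree. But your proposed target --- every gap in $[y,2y]$ is $\gg y^{3/4-\varepsilon}$ --- is contradicted by the paper's own data: $e_{116}=8\,787\,901$, $e_{117}=8\,787\,917$ (gap $16$), and $e_{976}=26\,554\,262\,369$, $e_{977}=26\,554\,262\,393$ (gap $24$, at a height where $y^{3/4}\approx 7\times 10^{7}$); moreover Question \ref{pytanie} asks whether such ``twin extremal'' pairs occur infinitely often, which the author evidently considers plausible. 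So any viable counting argument must \emph{tolerate} exceptionally tiny gaps and bound the number of small-drop vertices in each dyadic block, rather than exclude them pointwise --- a genuinely different and subtler statement. Note also that your slope-budget accounting has no teeth by itself: $\sum_k(\delta_k-\delta_{k+1})=\delta_1<\infty$ while the number of vertices is infinite, so no lower bound on individual slope drops can follow from the budget alone. Finally, your balancing heuristic $N(x)\asymp x^{1/4}$ does not match the data you invoke: the paper reports an empirical exponent near $\gamma/2\approx 0.289$, i.e.\ $e_k$ growing roughly like $k^{3.5}$, not $k^{4}$. In sum: correct reduction, correct and independently interesting RH upper bound on gaps, but the crux of the conjecture is untouched, and the specific route you propose toward it cannot work as stated.
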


It follows from our data that
$$\sum_{k=1}^{2000}\frac{1}{e_k}\cong 1,090..$$

\begin{conjecture}
 {\it The series
$$\sum_{k=1}^{\infty}\frac{1}{\ln e_k}$$
is divergent.}
\end{conjecture}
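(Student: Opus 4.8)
The plan is to obtain the divergence from an upper bound on the growth rate of the sequence $(e_k)$, and in fact to show that the conjecture is a consequence of the principal (conditional) result announced in the abstract, namely that $\lim_{k\to\infty} e_{k+1}/e_k = 1$ under the Riemann hypothesis. I would first isolate the following elementary comparison: if $(a_k)$ is a nondecreasing sequence of positive numbers with $a_k = o(k)$, then $\sum_k 1/a_k = \infty$. Indeed, given $\varepsilon > 0$ and $K$ with $a_k < \varepsilon k$ for $k \ge K$, one has $\sum_{k \ge K} 1/a_k > \varepsilon^{-1} \sum_{k \ge K} 1/k = \infty$. Taking $a_k = \ln e_k$ (which is increasing because $(e_k)$ is strictly increasing), this reduces the conjecture to the single estimate $\ln e_k = o(k)$.

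Next I would derive $\ln e_k = o(k)$ from $e_{k+1}/e_k \to 1$. Putting $b_k = \ln e_{k+1} - \ln e_k = \ln(e_{k+1}/e_k) \ge 0$, the hypothesis gives $b_k \to 0$, while $\ln e_k = \ln e_1 + \sum_{j=1}^{k-1} b_j$. Since the Cesàro means of a null sequence converge to $0$, we obtain $k^{-1}\ln e_k \to 0$, that is $\ln e_k = o(k)$. Combined with the comparison above, this proves the conjecture conditionally: \emph{if the Riemann hypothesis holds, then $\sum_k 1/\ln e_k$ diverges.} Thus, once the main theorem of the paper is in hand, the conjecture becomes one of its corollaries.

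The main obstacle is to remove the hypothesis, i.e. to control the ratios $e_{k+1}/e_k$ — or, more weakly, to bound $N(x) := \#\{k : e_k \le x\}$ from below — by elementary means. Here what is available seems too weak. Using the characterization of $\delta_k$ as a maximal difference quotient $\delta_k = \max_{p>e_k} I_k(p)$ (Proposition \ref{rekurencja}) together with Chebyshev's estimates, one can show $\delta_k \ge c/\ln e_k$ (the largest prime below $2e_k$ already forces the maximal slope from $e_k$ to be of this size) and, in the regime $e_{k+1} \ge 2 e_k$, also $\delta_k \le C/\ln e_{k+1}$. Together these give only $\ln e_{k+1} = O(\ln e_k)$, which is far too weak: it permits $\ln e_k$ to grow geometrically and the series to converge. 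The difficulty is structural: the mean density of primes near $e_k$ is precisely $1/\ln e_k$, the same order as $\delta_k$ itself, so no crude comparison of slopes can separate $\ln e_{k+1}$ from $\ln e_k$ by more than a constant factor. Any unconditional proof therefore appears to require genuine control of the error term in the prime number theorem — exactly the information that the Riemann hypothesis supplies and that underlies the conditional argument above.
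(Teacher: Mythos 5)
You should first be aware that the paper contains no proof of this statement: it is Conjecture 9, supported only by the numerical observation that $\sum_{k=1}^{2000}1/\ln e_k>100$. So there is no ``paper's own proof'' to compare against; the only question is whether your argument settles the conjecture. It does not, and you say so yourself: what you actually prove is that the Riemann hypothesis implies the conjecture. That conditional argument is correct in both steps --- the harmonic comparison (positive $a_k$ with $a_k=o(k)$, or even $a_k=O(k)$, forces $\sum_k 1/a_k=\infty$) and the Ces\`aro step (if $e_{k+1}/e_k\to 1$ then $\ln e_k=\ln e_1+\sum_{j<k}\ln(e_{j+1}/e_j)=o(k)$) --- and Theorem 30 of the paper then closes the implication. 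This is a genuine addition: the paper links its Conjecture 11 to the prime-ratio limit (Proposition 12) but never notices that Conjecture 11 also implies Conjecture 9.

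The real criticism is of your closing diagnosis, which is too pessimistic and hides the fact that your own reduction can be pushed through \emph{unconditionally}. For divergence you do not need $\ln e_k=o(k)$; $\ln e_k=O(k)$ suffices, since $a_k\le Ck$ already gives $\sum_k 1/a_k\ge C^{-1}\sum_k 1/k=\infty$. And $\ln e_k=O(k)$ requires only that the ratios $e_{k+1}/e_k$ be \emph{bounded}, i.e. $e_{k+1}-e_k=O(e_k)$, not that they tend to $1$. That weaker bound is precisely what the paper's Part III asserts the lens method gives without any hypothesis: applying the same tangent-line argument to $L(x)=x/\ln x$ and $\varepsilon(x)=C\,x/\ln^2 x$ (an admissible envelope by the prime number theorem with its classical, unconditional error term) ``gives only $S(x)=O(x)$''. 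The author presents $O(x)$ as a failure because his target was $o(x)$, but for Conjecture 9 it is exactly enough: bounded ratios give $\ln e_{k+1}\le\ln e_k+O(1)$, hence $\ln e_k=O(k)$, hence divergence. So the obstruction you describe is an artifact of the particular crude slope comparison you tried (which indeed yields only $\ln e_{k+1}=O(\ln e_k)$); it is not intrinsic to the problem, and RH-strength control of the error term is not needed --- modulo writing out the unconditional $S(x)=O(x)$ variant of the paper's lens argument, your scheme proves the conjecture outright.
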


Our data gives: $$\sum_{k=1}^{2000}\frac{1}{\ln e_k} > 100.$$

Since the set $\mathbb E$ of extremal  prime numbers is infinite
and, clearly,  the problem of finding any reasonable explicit
formula describing the correspondence $\mathbb N \ni
n\longrightarrow e_n$ is rather hopeless,  we may define and try
to study a function, which may be called  {\it extremal primes
counting function} $\pi_{\epsilon}$. The formula for
$\pi_{\epsilon}$ is analogous to the Formula  (\ref{wzor na pi}).
We set

\be\label{wzor na pi e}
\pi_{\epsilon}(x)= \sum_{p\in \mathbb E, p\leq x} 1.
\ee
 Unfortunately we know only $2200$ values of
$\pi_{\epsilon}(x)$ for $x\leq 5\cdot10^{11}$. However it seems to
be possible to formulate some conjectures about $\pi_\epsilon$.
Clearly $\pi_e(x)\leq \pi(x)$  and the growth of $\pi_\epsilon$ is
much  slower than the growth of $\pi$.
 For example $\pi_\epsilon(x_o)=1700$, when  $x_o = 196 062 395 777$ and for the same $x_o$ we have
 $\pi(x_o)= 7 855 721 212$. In particular we may try to find the
 best $\alpha < 1$ such that $\pi_{\epsilon}(x)=o(x^{\alpha})$
 observing the ratio $\frac{\ln n}{\ln e_n}$ when $n$ tends to
 infinity (in our case only to $n\leq 5\cdot 10^{11}$). May be
 only accidentally, but the best $\alpha$ obtained from our data
 is near to $\frac{\gamma}{2}$, where $\gamma$ is the Euler
 constant. Hence we formulate:

\begin{conjecture}
{\it There exists infimum

 $$\inf\left\{\alpha>0: \pi_{\epsilon}(x)=o(x^{\alpha})\right\}$$ and it is
 positive.}
\end{conjecture}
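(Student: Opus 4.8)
The plan is to reduce the statement to a single quantitative quantity and to treat its two halves separately. Write $S=\{\alpha>0:\pi_{\epsilon}(x)=o(x^{\alpha})\}$. Since $\pi_{\epsilon}(e_k)=k$ and $\pi_{\epsilon}$ is constant on each interval $[e_k,e_{k+1})$, the quotient $\frac{\ln\pi_{\epsilon}(x)}{\ln x}$ is decreasing on that interval and so is maximal at its left endpoint; hence $\limsup_{x\to\infty}\frac{\ln\pi_{\epsilon}(x)}{\ln x}=\limsup_{k\to\infty}\frac{\ln k}{\ln e_k}=:\beta$. A routine comparison shows $(\beta,\infty)\subseteq S\subseteq[\beta,\infty)$, so that $\inf S=\beta$. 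The \emph{existence} of the infimum is then immediate and unconditional: by (\ref{Legendre}) we have $\pi(x)=o(x)$, and $\pi_{\epsilon}\le\pi$, so $1\in S$; as $S$ is bounded below by $0$, the infimum exists and lies in $[0,1]$. Everything therefore reduces to proving $\beta>0$, i.e. to a \emph{lower bound} of the form $\pi_{\epsilon}(x)\gg x^{c}$ along a sequence $x\to\infty$, equivalently $e_k\le k^{C}$ for infinitely many $k$.

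To bound $\beta$ from below I would bound the gaps $e_{k+1}-e_k$ from above. Let $\mathrm{Li}(x)=\int_2^x\frac{dt}{\ln t}$, which is strictly concave since $\mathrm{Li}''(x)=-\frac{1}{x(\ln x)^2}<0$. On $[e_k,e_{k+1}]$ the majorant $\epsilon$ coincides with the chord through $(e_k,\pi(e_k))$ and $(e_{k+1},\pi(e_{k+1}))$, so at the midpoint $m=\tfrac12(e_k+e_{k+1})$ one has $\pi^{*}(m)\le\epsilon(m)=\tfrac12\bigl(\pi^{*}(e_k)+\pi^{*}(e_{k+1})\bigr)$. Writing $\pi^{*}=\mathrm{Li}+R$ with $E:=\sup_{[e_k,e_{k+1}]}|R|$ and using the exact second-difference identity $\mathrm{Li}(m)-\tfrac12(\mathrm{Li}(e_k)+\mathrm{Li}(e_{k+1}))=\tfrac18|\mathrm{Li}''(\xi)|(e_{k+1}-e_k)^2$ for some $\xi\in(e_k,e_{k+1})$, this inequality rearranges to $\tfrac18|\mathrm{Li}''(\xi)|(e_{k+1}-e_k)^2\le 2E$, that is $(e_{k+1}-e_k)^2\ll E\cdot e_k(\ln e_k)^2$ (the localisation $\xi\asymp e_k$ is recovered a posteriori once the gap is shown to be $o(e_k)$). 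In words: a long stretch between consecutive extremal primes forces a correspondingly large negative deviation of $\pi$ from $\mathrm{Li}$ in the middle of that stretch.

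Now I would feed a Prime Number Theorem error term into this gap bound. If $\bigl|\pi(x)-\mathrm{Li}(x)\bigr|\ll x^{\theta}$ for some fixed $\theta<1$, then $E\ll e_k^{\theta}$ gives $e_{k+1}-e_k\ll e_k^{(1+\theta)/2+o(1)}$, so climbing from $e_1$ to $x$ requires at least $\gg x^{(1-\theta)/2-o(1)}$ steps; hence $\pi_{\epsilon}(x)\gg x^{(1-\theta)/2-o(1)}$ and $\beta\ge\tfrac{1-\theta}{2}>0$. Under the Riemann hypothesis one may take $\theta=\tfrac12$ (via $\bigl|\pi(x)-\mathrm{Li}(x)\bigr|\ll\sqrt{x}\ln x$), yielding the explicit conditional bound $\inf S=\beta\ge\tfrac14$, which is positive and consistent with the reported numerical value near $\gamma/2$.

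The main obstacle is that the scheme requires a \emph{power} saving in the error term, and none is known unconditionally. The best available bound, $\bigl|\pi(x)-\mathrm{Li}(x)\bigr|\ll x\exp(-c\sqrt{\ln x})$, only gives $e_{k+1}-e_k\ll e_k\exp(-c'\sqrt{\ln e_k})$ and, after summation, the sub-polynomial estimate $\pi_{\epsilon}(x)\gg\exp(c''\sqrt{\ln x})$; this forces $\frac{\ln\pi_{\epsilon}(x)}{\ln x}\to0$ and is thus powerless to separate $\beta$ from $0$. Moreover the implication nearly reverses: a long extremal gap demands a deviation $\mathrm{Li}(m)-\pi^{*}(m)$ of comparable size, so $\beta=0$ would require deviations of order $x^{1-o(1)}$ along a sequence, i.e. zeros of $\zeta$ with real part tending to $1$. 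Consequently the positivity of $\inf S$ is, up to $\varepsilon$-losses, \emph{equivalent} to a quasi-Riemann hypothesis (a zero-free strip $\Re s>\theta$ with $\theta<1$), and an unconditional proof is exactly as hard as an unconditional power-saving in the Prime Number Theorem; the existence half, by contrast, is elementary and unconditional.
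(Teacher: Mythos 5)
The first thing to note is that there is no proof of this statement in the paper to compare yours against: it is one of the paper's \emph{conjectures}, offered with numerical evidence only (the observation that $\frac{\ln n}{\ln e_n}$ stays near $\gamma/2$ for $n\le 2200$), and it remains open. Your proposal, as you candidly admit, does not close it either, and that is the genuine gap: the conjecture asserts positivity of the infimum outright, whereas you obtain positivity only under the Riemann Hypothesis, or more generally under a power-saving error term $|\pi(x)-Li(x)|\ll x^{\theta}$ with $\theta<1$. The parts you do prove are correct: the identification $\inf S=\beta:=\limsup_k\frac{\ln k}{\ln e_k}$ is right; the existence half is trivial and unconditional (since $\pi_{\epsilon}\le\pi=o(x)$ one has $1\in S$, and $S$ is bounded below); and the midpoint inequality $\tfrac18\,|Li''(\xi)|\,(e_{k+1}-e_k)^2\le 2E$ is a valid consequence of the fact that $\epsilon$ restricted to a lens is the chord and majorizes $\pi^{*}$ (though you should make the localisation $\xi\asymp e_k$ explicit: your own inequality first rules out $e_{k+1}\ge 2e_k$ for large $k$, since otherwise it would force $e_{k+1}^2\ll e_{k+1}^{1+\theta+o(1)}$). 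One caution: your closing claim that positivity is ``equivalent'' to a quasi-Riemann hypothesis overreaches. What your argument establishes is quasi-RH $\Rightarrow\beta>0$ together with its contrapositive, not a converse; a large oscillation of $\pi-Li$ is not, by anything you wrote, forced to produce a long lens.

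What your attempt does deliver is a conditional theorem that is not in the paper but is very much in its spirit: \emph{RH implies this conjecture, with $\inf S\ge\tfrac14$}. This parallels the paper's Part II, where Theorem 30 proves that RH implies the neighbouring conjecture $\lim_{k\to\infty} e_{k+1}/e_k=1$ by trapping each lens between $Li-\varepsilon$ and $Li+\varepsilon$ with chord and tangent geometry; your second-difference argument is a cleaner, quantitative version of the same mechanism, giving $e_{k+1}-e_k\ll e_k^{3/4+o(1)}$ where the paper's theorem gives only $o(e_k)$. Moreover, your reduction has a consequence worth stating explicitly: the paper remarks in Part III that under RH a finer analysis gives $S(x)=O(\sqrt{x}\,\ln^2 x)$, and feeding that bound into your covering argument yields $\inf S\ge\tfrac12$, which is strictly larger than the numerically suggested value $\gamma/2\approx 0.289$. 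Since $\frac{\ln k}{\ln e_k}$ converges only logarithmically, this indicates that the paper's $\gamma/2$ is most likely a finite-size artifact rather than the true limit; your framework is exactly what makes that criticism precise, and it would be a worthwhile remark to record even though the conjecture itself, i.e.\ an unconditional proof of positivity, remains out of reach.
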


Our numerical data support strongly also the following interesting
conjecture:

\begin{conjecture}\label{conjectura 11}
 {\it In the notations as above, we have:
$$\lim_{k\rightarrow \infty}\frac{e_{k+1}}{e_k} = 1.$$}
\end{conjecture}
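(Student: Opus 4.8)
The plan is to argue by contradiction, turning the conditional hypothesis into the explicit error term it supplies. Under the Riemann hypothesis one has the classical bound
\[
\pi(x) = \mathrm{li}(x) + O\!\left(\sqrt{x}\,\ln x\right),
\]
with an explicit constant (Schoenfeld's $\tfrac{1}{8\pi}$ for $x\ge 2657$), where $\mathrm{li}(x)=\int_0^x \frac{dt}{\ln t}$. The feature I want to exploit is that $\mathrm{li}$ is smooth and strictly concave, with $\mathrm{li}'(x)=1/\ln x$ and $\mathrm{li}''(x)=-1/\bigl(x(\ln x)^2\bigr)$, and that $\pi$ hugs it to within an error negligible on the scale that matters below. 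Suppose the conclusion fails. Since $(e_k)$ is strictly increasing its ratios exceed $1$, so failure means there is a constant $c>0$ and infinitely many indices $k$ with $e_{k+1}\ge (1+c)\,e_k$. I will show that each sufficiently large such $k$ contradicts the defining inequality $\epsilon\ge\pi^*$.

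The geometric heart of the argument is that a strictly concave function bulges above its chords by an amount governed by its second derivative, and here that bulge dwarfs the error term. On $[e_k,e_{k+1}]$ the envelope $\epsilon$ coincides with the chord $L$ joining $(e_k,\pi(e_k))$ and $(e_{k+1},\pi(e_{k+1}))$, because consecutive vertices of $\Gamma$ are joined by a segment. Write $a=e_k$, $b=e_{k+1}$ and $m=(a+b)/2$. A second-order (midpoint) Taylor estimate gives, for some $\xi\in(a,b)$,
\[
\mathrm{li}(m)-\tfrac{1}{2}\bigl(\mathrm{li}(a)+\mathrm{li}(b)\bigr) = -\tfrac{1}{8}\,\mathrm{li}''(\xi)\,(b-a)^2 = \frac{(b-a)^2}{8\,\xi\,(\ln \xi)^2} \ge \frac{(b-a)^2}{8\,b\,(\ln b)^2}.
\]
The hypothesis $b\ge(1+c)a$ forces $b-a\ge \tfrac{c}{1+c}\,b$ uniformly, no matter how large the ratio is, so this concavity gap is at least a constant multiple of $b/(\ln b)^2$, which tends to infinity and, crucially, is of larger order than the error $\sqrt{b}\,\ln b$.

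Finally I assemble the contradiction at the point $m$. The chord of $\mathrm{li}$ over $[a,b]$ differs from $L$ only by the endpoint discrepancies, hence by $O(\sqrt{b}\,\ln b)$, and $\pi^*(m)\ge\pi(m)\ge \mathrm{li}(m)-O(\sqrt{b}\,\ln b)$ since $\pi=[\pi^*]$. Chaining these estimates,
\[
\pi^*(m) \ge L(m) + \frac{(b-a)^2}{8\,b\,(\ln b)^2} - O\!\left(\sqrt{b}\,\ln b\right),
\]
and since the concavity gap exceeds the error for $k$ large, the right-hand side is strictly greater than $L(m)=\epsilon(m)$. This contradicts $\epsilon\ge\pi^*$, and as there are infinitely many such $k$ one of them is large enough to trigger it. The one substantive point — the main obstacle — is the order-of-magnitude comparison $x/(\ln x)^2 \gg \sqrt{x}\,\ln x$: it is precisely the square-root error furnished by the Riemann hypothesis that lets the concavity gap win, so the argument would not survive with only the unconditional error term. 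The remaining care is simply to keep every estimate uniform in $c$ and valid for arbitrarily large ratios, which the bound $b-a\ge\tfrac{c}{1+c}\,b$ guarantees.
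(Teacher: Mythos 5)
Your proof is correct (conditionally on the Riemann hypothesis, exactly as the paper's own proof of this conjecture, Theorem 30), but it takes a genuinely different and considerably more elementary route. The paper proceeds through a tangent-line ``lens'' construction: for each $x$ it takes the line tangent to $Li-\sqrt{x}\ln x$, lets $h_{-}(x)<0<h_{+}(x)$ be the offsets at which this line meets $Li+\sqrt{x}\ln x$, and proves $h_{+}(x)-h_{-}(x)=o(x)$ by replacing both curves with third-order Taylor polynomials, reducing to a one-parameter family of cubic equations $W_x(h)=0$ whose roots are localized via the substitution $h=\theta x$ (Lemma 20, Propositions 25--26); it then traps each interval $[e_k,e_{k+1}]$ inside such a tangent configuration (Lemmas 28--29 and the translation argument inside Theorem 30) and finishes with the four-sequence limit Lemma 27. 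You instead derive a contradiction from a single second-order estimate: if $e_{k+1}\geq(1+c)e_k$ infinitely often then, writing $a=e_k$, $b=e_{k+1}$, the midpoint concavity bulge of $\mathrm{li}$ over its chord is at least $(b-a)^2/\bigl(8b\ln^2 b\bigr)\geq \frac{c^2}{8(1+c)^2}\,b/\ln^2 b$, which eventually dominates the RH error $O(\sqrt{b}\ln b)$ and forces $\pi^{*}$ strictly above the affine piece of $\epsilon$ on $[e_k,e_{k+1}]$ --- impossible since $\epsilon\geq\pi^{*}$. Both arguments rest on the same two inputs (the RH bound $|\pi-Li|=O(\sqrt{x}\ln x)$, which is the content of the paper's Lemma 28, and the strict concavity of $Li$), plus the fact that $\epsilon$ is affine between consecutive extremal points with $\epsilon(e_k)=\pi(e_k)$; your individual steps (the mean-value form of the midpoint error, the uniform bound $b-a\geq\frac{c}{1+c}\,b$, monotonicity of $x\ln^2 x$, and $\pi^{*}\geq\pi$) are all sound. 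What your route buys: it needs only second derivatives instead of fourth, avoids the cubic-root analysis and the auxiliary sequence lemma entirely, and is implicitly quantitative --- demanding non-contradiction yields $(e_{k+1}-e_k)^2=O\bigl(e_{k+1}^{3/2}\ln^3 e_{k+1}\bigr)$, i.e. $e_{k+1}-e_k=O\bigl(e_{k+1}^{3/4}\ln^{3/2}e_{k+1}\bigr)$, a power-saving bound far stronger than the $S(x)=o(x)$ of the paper's Corollary 31 and close to the $O(\sqrt{x}\ln^2 x)$ that the paper only mentions as plausible in Part III. What the paper's construction buys is a geometric framework tied to the two envelope curves, which the author reuses in Part III to discuss what survives under weaker, unconditional error terms.
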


We will prove below, in Part II,  that the Riemann Hypothesis
implies the Conjecture \ref{conjectura 11}. This conjecture is
interesting itself, but also because of the following:

\begin{proposition}\label{PNT}
 {\it If $$\lim_{k\rightarrow \infty}\frac{e_{k+1}}{e_k} =
1$$then$$\lim_{n\rightarrow \infty}\frac{p_{n+1}}{p_n} = 1.$$}

\end{proposition}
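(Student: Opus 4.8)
The plan is to exploit the fact that every extremal prime is an honest prime, so that $\mathbb E$ is a strictly increasing, unbounded subsequence of $\mathbb P$, the unboundedness being precisely Proposition~\ref{rekurencja}. Consequently every prime $p_n\ge 2$ falls into a unique half-open block $[e_k,e_{k+1})$ cut out by two consecutive extremal primes; write $k=k(n)$ for the index of the block containing $p_n$. Because the blocks exhaust $\mathbb P$ and $e_k\to\infty$, the index map satisfies $k(n)\to\infty$ as $n\to\infty$.

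First I would fix a prime $p_n$ and locate its block, so that $e_k\le p_n<e_{k+1}$. The crucial observation is that the right endpoint $e_{k+1}$ is itself a prime strictly larger than $p_n$; hence the very next prime $p_{n+1}$ after $p_n$ cannot exceed it, that is $p_{n+1}\le e_{k+1}$. Combining this with $p_n\ge e_k$ yields the one-line estimate
\be
1<\frac{p_{n+1}}{p_n}\le\frac{e_{k+1}}{e_k},
\ee
valid for every $n$, with $k=k(n)$.

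Finally I would pass to the limit by a squeeze. Given $c>1$, the hypothesis $\lim_k e_{k+1}/e_k=1$ produces an index $K$ with $e_{k+1}/e_k<c$ for all $k\ge K$. Since $k(n)\to\infty$, there is $N$ such that $k(n)\ge K$ whenever $n\ge N$, and then the displayed inequality gives $1<p_{n+1}/p_n<c$ for all such $n$. Letting $c\to 1^{+}$ yields $\lim_{n\to\infty}p_{n+1}/p_n=1$, as required.

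I do not expect a genuine obstacle here: the whole argument is a squeeze, and the single point that must be checked with care is the inequality $p_{n+1}\le e_{k+1}$, which rests entirely on the fact that the block endpoint $e_{k+1}$ is a prime number and not merely an arbitrary real threshold. Everything else — the existence and exhaustiveness of the blocks, the monotonicity $k(n)\to\infty$, and the final passage to the limit — is routine.
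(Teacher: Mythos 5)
Your proposal is correct and is essentially the paper's own proof: both locate $p_n$ in a block $[e_{k(n)},e_{k(n)+1})$ cut out by consecutive extremal primes, use that $e_{k(n)+1}$ is itself a prime to get $p_{n+1}\le e_{k(n)+1}$, hence $\frac{p_{n+1}}{p_n}\le \frac{e_{k(n)+1}}{e_{k(n)}}$, and conclude by a squeeze. Your write-up merely makes explicit two points the paper leaves implicit, namely $k(n)\to\infty$ and the reason why $p_{n+1}\le e_{k(n)+1}$.
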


\begin{proof}

 For each $n\in \mathbb N$ there exists $k(n)\in \mathbb N$ such
that
$$e_{k(n)}\leq p_n < p_{n+1}\leq e_{k(n)+1}.$$ Thus
$$\frac{p_{n+1}}{p_n}\leq \frac{e_{k(n)+1}}{e_{k(n)}}$$and the last sequence tends by our assumption to 1.
Let us recall here, that  $\lim_{n\rightarrow\infty}
\frac{p_{n+1}}{p_n} =1 $ implies PNT.

\end{proof}

It follows  directly from the definitions of the functions $\pi$
and $\pi_{\epsilon}$ that $\pi(e_{k+1})-\pi(e_k)\geq 1$ and the
equality may occur. Except for trivial $e_1=2$ and $e_2=3$ I have
found   two such "twin extremal primes" for $k=116$ and $k=976$.
Namely: $e_{116}=8 787 901$, $e_{117}= 8 787 917$ and
$\pi(e_{116})=589 274$, $e_{976}=26 554 262 369$ $e_{977}= 26 554
262 393$ and $\pi(e_{976})= 1 156 822 345$. We ask if:

\begin{question}\label{pytanie}
{\it Does there exists infinitely many $k\in \mathbb N$ such that
$\pi(e_{k+1})-\pi(e_k) = 1$.}
\end{question}

Some additional remarks about the "small" gaps between extremal
primes are in Part III.

Another exception is related to the inequality $I_k(p)\leq
I_k(p_o)$,
 which is described in  Proposition \ref{rekurencja}. One may ask if
the number of points $p>e_k$ such that $I_k(p)=I_k(p_o)$ is
greater than 1. In our numerical data we have only two such
examples, namely for $k=2$ we have $I_2(5)=I_2(7)$ and also
$I_4(23)=I_4(31)=I_4(43)=I_4(47) = \frac{1}{4}=\delta_4$ but in
fact our programme searching "next extremal primes" was not
written to "catch" such exceptions.

\section{Part II}

\subsection{Definition of lenses}

With the notation as in Part I, the intervals $[e_k,e_{k+1})$ (in
$\mathbb N$) will be called {\it lenses}. More exactly:

\begin{definition}
 Definition: {\it Given a positive integer $k\in \mathbb N$ the lens
 $S_k$ is a set $$S_k= \left\{n\in \mathbb N: e_k\leq n
 <e_{k+1}\right\}.$$ The difference $e_{k+1}- e_k$ will be called {\it the length} of the lens
 $S_k$ and will be denoted by $|S_k|$.}
 \end{definition}

 Sometimes we will use the name "lens" for a part of graph of
 $\pi^{*}$ for $x\in [e_k,e_{k+1})$. Our aim is to study the order
 of magnitude  of $|S_k|$ when $k\rightarrow \infty$. Since we
 will apply the language of differential calculus, it will be more
 comfortable to work with the function  $[2,\infty)\ni
 x\rightarrow S(x)\in [1,\infty)$ where
 $$x\in [e_k,e_{k+1})\Longrightarrow S(x)=|S_k|.$$
 The typical lenses and the graph of $\epsilon(x)$ for $x\leq 113$
 are illustrated on the pictures 1-3 at the end of this paper.

\subsection{The integral logarithm and error term}

We shall consider the following - well known -functions:
$L:[2,\infty)\longrightarrow [0,\infty)$ and
$\varepsilon:[2,\infty)\longrightarrow [0,\infty)$, defined by the
following formulas: \be\label{definicja Li}
L(x)=\int_{2}^{x}\frac{1}{\ln t}dt
\ee

 and
\be\label{definicja error}
\varepsilon(x)=\sqrt{x}\cdot\ln
x.
\ee

 The first is called {\it integral logarithm} (we will write
also $L(x)=Li(x)$), and the se\-cond is called {\it error term}.
Together with $L$ and $\varepsilon$ we will consider the functions

\be
\varphi(x)=L(x)-\varepsilon(x)
 \ee
 and for $x\in
(2,\infty)$ and $h\in \mathbb R$ \be l(x,h)= \varphi'(x)\cdot h +
\varphi(x)\ee 

Clearly all these functions are analytic at least in $(2,\infty)$.
We will use the derivatives of the considered functions  to the
order four and we shall write $y$ instead of $\ln x$  to present
some formulas in more compact form. Hence we have:

    \be\label{lp}
    L^{(1)}(x)= \frac{1}{\ln x}= \frac{1}{y}     
    \end{equation}

    \begin{equation}\label{ld}
    L^{(2)}(x)= \frac{-1}{x\cdot \ln x}= \frac{-1}{x\cdot
    y^2}                                        
    \end{equation}

    \begin{equation}\label{lt}
     L^{(3)}(x)= \frac{\ln x +2}{x^2\cdot {\ln^3 x}}
    = \frac{y+2}{x^2\cdot y^3},                
    \end{equation}

    \begin{equation}\label{lc}
    L^{(4)}(x)=\frac{-(2\cdot {\ln^2 x}+6 \ln x +6)}{x^3\cdot \ln^4 x}
    = \frac{-(2\cdot y^2 + 6y +6)}{x^3\cdot y^4}        
    \end{equation}


 The derivatives of error term function, written in an analogous manner, run as follows:

\be\label{ez}
    \varepsilon(x)= \sqrt{x}\cdot \ln x = \sqrt{x}\cdot y,
    \ee

\be\label{ep}
    \varepsilon^{(1)}(x)= \frac{\ln x
    +2}{2\sqrt{x}}=\frac{y+2}{2\sqrt{x}},
\ee

\be\label{ed}
    \varepsilon^{(2)}(x)= \frac{-\ln
    x}{4x\sqrt{x}}=\frac{-y}{4x\sqrt{x}}
\ee

\be
    \varepsilon^{(3)}=\frac{3\ln x - 2}{8x^2\sqrt{x}}=
    \frac{3y-2}{8x^2\sqrt{x}},
\ee

\be
    \varepsilon^{(4)}(x)=\frac{-15\ln x
    +16}{16x^3\sqrt{x}}=\frac{-15y+16}{16x^3\sqrt{x}}.
    \ee

Let us observe, that the second derivatives of the functions $L$
and $\varepsilon$ are negative, so both these functions are
concave.

The second derivative of the function $\varphi$ has the form
$$\varphi^{(2)}(x)=\frac{-4\sqrt{x}+\ln^3 x}{x\sqrt{x}\ln^2
x}=\frac{-4\sqrt{x}+y^3}{4x\sqrt{x}y^2}$$

then taking into account that
$$\lim_{x\rightarrow \infty}(-4\sqrt{x}+\ln^3 x)= -\infty$$
we can state :

\vspace{3mm}

\begin{proposition}\label{wypuklosc}
{\it There exists $x_o\in(2,\infty)$ such, that the function
$\varphi$ is concave in the interval  $[x_o,\infty)$.}
\end{proposition}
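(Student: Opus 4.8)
The plan is to read the sign of $\varphi^{(2)}$ directly off the expression computed just above the statement. Writing it as
\[
\varphi^{(2)}(x)=\frac{-4\sqrt{x}+\ln^3 x}{4x\sqrt{x}\ln^2 x},
\]
I observe that the denominator $4x\sqrt{x}\ln^2 x$ is strictly positive for every $x\in(2,\infty)$, since $x>2$ forces $\ln x>0$. Hence concavity of $\varphi$ on a half-line $[x_o,\infty)$ is equivalent to the numerator being nonpositive there, and the whole problem reduces to finding a threshold $x_o$ beyond which $\ln^3 x\le 4\sqrt{x}$.

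First I would set $g(x)=\ln^3 x-4\sqrt{x}$ and prove $g(x)<0$ for all sufficiently large $x$. The cleanest route uses the substitution $x=e^{y}$ suggested by the author's convention $y=\ln x$: the inequality $g(x)<0$ becomes $y^{3}<4e^{y/2}$, i.e. $y^{3}e^{-y/2}<4$. Because $y=\ln x\to\infty$ as $x\to\infty$ and $y^{3}e^{-y/2}\to 0$ (an exponential dominates any fixed power), there is $y_o$ with $y^{3}e^{-y/2}<4$ for all $y\ge y_o$; then $x_o=e^{y_o}$ does the job.

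For an argument that stays with $x$ and uses only elementary calculus, I would instead differentiate: $g'(x)=\frac{3\ln^2 x}{x}-\frac{2}{\sqrt{x}}=\frac{1}{\sqrt{x}}\bigl(\frac{3\ln^2 x}{\sqrt{x}}-2\bigr)$. Since $\frac{\ln^2 x}{\sqrt{x}}\to 0$, the parenthesis is eventually negative, so $g$ is eventually strictly decreasing; combined with the limit $\lim_{x\to\infty}(-4\sqrt{x}+\ln^3 x)=-\infty$ recorded before the statement, this forces $g<0$ past the point where monotonicity sets in. Either way we obtain $\varphi^{(2)}(x)\le 0$ on $[x_o,\infty)$, which is precisely concavity there.

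The only genuine content is the elementary growth comparison $\ln^3 x=o(\sqrt{x})$; everything else is bookkeeping about the sign of a quotient, so I do not expect any real obstacle. If a concrete constant were desired, one could pin down an explicit admissible $x_o$ numerically.
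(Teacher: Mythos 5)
Your proposal is correct and follows essentially the same route as the paper: the paper also reads the sign of $\varphi^{(2)}$ from the displayed quotient and concludes concavity from $\lim_{x\to\infty}\bigl(-4\sqrt{x}+\ln^3 x\bigr)=-\infty$, i.e.\ from the growth comparison $\ln^3 x=o(\sqrt{x})$. Your two justifications of that comparison (and the monotonicity remark, which is redundant given the limit) merely fill in details the paper leaves implicit; note also that you correctly use the denominator $4x\sqrt{x}\ln^2 x$, where the paper's first displayed form has a typo omitting the factor $4$.
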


\subsection{A remark on Taylor polynomials of considered functions}

   Let us fix a point $x\in (2,\infty)$. Let $T^{(3)}_{x,L}$
 denote the Taylor polynomial of order three of the function
 $L$ with the center at $x$. Hence

\be
T^{(3)}_{x,L}(h)= L(x) + L^{(1)}(x)\cdot h+\frac{1}{2}\cdot
L^{(2)}(x)\cdot h^2 + \frac{1}{6}\cdot
L^{(3)}(x)\cdot h^3.
\ee

The remainder $R^{(3)}_x(h)=L(x+h)-T^{(3)}_{x,L}(h)$, written in
the Lagrange form,  is given by the formula:

\be R^{(3)}_x(h)= \frac{1}{24}L^{(4)}(\xi)\cdot h^4,\ee
 where
$\xi$ is a point from the  $(x,x+h)$. Since $L^{(4)}<0$ in all its
domain, we have the inequality:

\vspace{3mm}

\begin{proposition}
{\it  For each $x\in (2,\infty)$ and for each $h\in (2-x,\infty)$
the following inequality is true:
$$L(x+h)\leq T^{(3)}_{x,L}(h).$$}
\end{proposition}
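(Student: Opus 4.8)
The plan is to read off the conclusion directly from the Lagrange remainder formula stated just above, together with the sign of the fourth derivative. First I would set $R^{(3)}_x(h)=L(x+h)-T^{(3)}_{x,L}(h)$ and invoke the displayed identity $R^{(3)}_x(h)=\frac{1}{24}L^{(4)}(\xi)\cdot h^4$, where $\xi$ lies strictly between $x$ and $x+h$. The domain hypothesis $h\in(2-x,\infty)$ is exactly what guarantees $x+h>2$, so both endpoints of the interval joining $x$ and $x+h$ lie in $(2,\infty)$; since this interval is connected, $\xi\in(2,\infty)$ as well, which is the region where $L$ is analytic and where the formula for $L^{(4)}$ is valid.

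Next I would appeal to the explicit expression for $L^{(4)}$ computed in equation~(\ref{lc}), namely
\be
L^{(4)}(x)=\frac{-(2y^2+6y+6)}{x^3\cdot y^4},
\ee
where $y=\ln x$. For $x>2$ we have $y=\ln x>0$, so the denominator $x^3 y^4$ is strictly positive, while the numerator $-(2y^2+6y+6)$ is strictly negative because $2y^2+6y+6>0$ for all real $y$ (its discriminant $36-48<0$). Hence $L^{(4)}<0$ throughout $(2,\infty)$, and in particular $L^{(4)}(\xi)<0$.

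Finally I would combine these observations: since $h^4\ge 0$ always and $L^{(4)}(\xi)<0$, the product $\frac{1}{24}L^{(4)}(\xi)\cdot h^4$ is nonpositive, so $R^{(3)}_x(h)\le 0$, which rearranges to $L(x+h)\le T^{(3)}_{x,L}(h)$. The case $h=0$ is trivial since both sides equal $L(x)$ and the remainder vanishes. I do not anticipate a genuine obstacle here, as the argument is a direct application of Taylor's theorem with Lagrange remainder; the only point requiring a moment of care is verifying that $\xi$ does not leave the analytic domain, which is precisely ensured by the stated restriction $h>2-x$, and confirming that the remainder formula applies uniformly whether $h$ is positive or negative (so that $\xi$ may lie on either side of $x$).
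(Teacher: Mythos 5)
Your proposal is correct and follows essentially the same route as the paper: the paper likewise writes the remainder in Lagrange form $R^{(3)}_x(h)=\frac{1}{24}L^{(4)}(\xi)\cdot h^4$ and concludes from $L^{(4)}<0$ on its domain that $L(x+h)\leq T^{(3)}_{x,L}(h)$. Your additional checks (positivity of $2y^2+6y+6$ via the discriminant, and that $\xi$ stays in the analytic domain because $h>2-x$) merely make explicit what the paper leaves implicit.
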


Let $ T^{(3)}_{x,\varepsilon}$ denote the Taylor polynomial of
order three of the function  $\varepsilon$ with the center at $x$,
i.e. \be T^{(3)}_{x,\varphi}(h)= \varepsilon(x) +
\varepsilon^{(1)}(x)\cdot h+\frac{1}{2}\cdot \varphi^{(2)}(x)\cdot
h^2 + \frac{1}{6}\cdot L^{(3)}(x)\cdot h^3.\ee

Using an analogous argumentation as in the case of the function
$L$ we have:

\vspace{3mm}
\begin{proposition}\label{nierownosc z Taylorem}
 {\it  For each $x\in (2,\infty)$ and for each $h\in
(2-x,\infty)$ the following inequality is true:
$$\varepsilon(x+h)\leq T^{(3)}_{x,\varepsilon}(h),$$}
\end{proposition}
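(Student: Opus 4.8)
The plan is to proceed exactly as in the proof of the preceding proposition for $L$, simply replacing $L$ by $\varepsilon$. I would fix $x\in(2,\infty)$ and $h\in(2-x,\infty)$, so that the whole segment joining $x$ and $x+h$ stays inside $(2,\infty)$, where $\varepsilon$ is analytic. Then I would write Taylor's formula of order three for $\varepsilon$ centred at $x$ with the Lagrange remainder,
\[
\varepsilon(x+h)-T^{(3)}_{x,\varepsilon}(h)=\frac{1}{24}\,\varepsilon^{(4)}(\xi)\,h^{4},
\]
where $\xi$ lies strictly between $x$ and $x+h$. Since $h^{4}\ge 0$, the claimed inequality $\varepsilon(x+h)\le T^{(3)}_{x,\varepsilon}(h)$ is equivalent to the single assertion $\varepsilon^{(4)}(\xi)\le 0$, and the whole proposition is thereby reduced to controlling the sign of the fourth derivative of the error term along the relevant interval.

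The second step is to read off this sign from the formula already computed above, namely
\[
\varepsilon^{(4)}(t)=\frac{-15\ln t+16}{16\,t^{3}\sqrt{t}}=\frac{16-15y}{16\,t^{3}\sqrt{t}},\qquad y=\ln t .
\]
The denominator is positive, so $\varepsilon^{(4)}(t)$ has the sign of $16-15\ln t$; it is negative precisely when $\ln t>\tfrac{16}{15}$, that is for $t>t_{0}:=e^{16/15}\approx 2.9$, and it vanishes only at the single point $t_{0}$. Consequently, in every configuration for which the mean-value point $\xi$ furnished by Taylor's theorem satisfies $\xi\ge t_{0}$, the remainder is nonpositive and the inequality follows verbatim from the $L$-argument. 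Since $t_{0}$ barely exceeds $2$, this already settles every case except those in which both endpoints $x$ and $x+h$ are squeezed into the short initial interval where $\ln$ is smallest.

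The one point at which the analogy with $L$ is not automatic, and the step I expect to be the main obstacle, is exactly this: $\varepsilon^{(4)}$ — unlike $L^{(4)}$, which is negative throughout its domain — is positive on the small interval $(2,t_{0})$, so the Lagrange point $\xi$ can fall below $t_{0}$. To reach the full stated range I would abandon the single mean-value point in favour of the integral form of the remainder,
\[
\varepsilon(x+h)-T^{(3)}_{x,\varepsilon}(h)=\frac{1}{6}\int_{0}^{h}(h-t)^{3}\,\varepsilon^{(4)}(x+t)\,dt,
\]
and analyse the integrand against the single threshold $t_{0}$. Here $\varepsilon^{(4)}(x+t)$ changes sign at most once, at the value of $t$ where $x+t=t_{0}$, while the weight $(h-t)^{3}$ keeps a fixed sign on the range of integration, so the positive and negative contributions can be isolated and compared directly, using the explicit formula for $\varepsilon^{(4)}$ and its decay to $0$ at infinity. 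Showing that the small positive part of this integral, coming from the arguments below $t_{0}$, does not outweigh the negative part is the delicate estimate on which the full-range statement rests, and it is precisely the part that requires the most care.
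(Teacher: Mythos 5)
Your first two paragraphs reproduce, in expanded form, what the paper itself does: its entire proof is the one line ``using an analogous argumentation as in the case of the function $L$'', i.e.\ the Lagrange remainder $\frac{1}{24}\varepsilon^{(4)}(\xi)h^4$ together with a tacitly assumed negativity of $\varepsilon^{(4)}$. Your scepticism about that analogy is exactly right and is the key observation: $\varepsilon^{(4)}(t)=\frac{16-15\ln t}{16\,t^{3}\sqrt{t}}$ is \emph{positive} on $(2,t_0)$ with $t_0=e^{16/15}\approx 2.91$, whereas $L^{(4)}$ is negative on all of $(2,\infty)$; the paper's proof silently skips over this sign change.

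The genuine gap is in your final paragraph: the ``delicate estimate'' you defer to is not merely delicate, it is impossible, because the proposition as printed is false on its stated range. If both $x$ and $x+h$ lie in $(2,t_0)$, then the Lagrange point $\xi$ (equivalently, every argument $x+t$ in your integral form of the remainder) lies in $(2,t_0)$, so $\varepsilon^{(4)}>0$ throughout, there is no negative contribution to play off against the positive one, and the remainder is strictly positive for $h\neq 0$; hence $\varepsilon(x+h)>T^{(3)}_{x,\varepsilon}(h)$. Concretely, for $x=2.1$ and $h=0.5$ one computes $\varepsilon(2.6)\approx 1.54072$ while $T^{(3)}_{2.1,\varepsilon}(0.5)\approx 1.54067$. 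So the correct repair is not a finer integral estimate but a restriction of the statement: the inequality holds whenever the segment from $x$ to $x+h$ lies in $[t_0,\infty)$ (for instance for $x\geq 3$ and $x+h\geq 3$), and this restricted version --- which your first paragraph already proves completely via the Lagrange remainder --- is all that the paper ever uses, since the sequel invokes the proposition only on $(x_o,\infty)$ with $x_o$ large, in the definition of $h_{\pm}$ and in inequality (\ref{rownanie glowne}).
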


and in consequence we have the inequality (true for all $h\in
(2-x,\infty)$):

\be\label{rownanie glowne}
L(x+h)+\varepsilon(x+h)<T^{(3)}_{x,L}(h)+T^{(3)}_{x,\varepsilon}(h)
\ee

 \vspace{3mm}

\subsection{Definition of two functions}

 In this section we shall define two functions
$h_{+}:(x_o,\infty)\ni x \rightarrow h_{+}(x)\in \mathbb R$  and $
h_{-}:(x_o,\infty)\ni x\rightarrow h_{-}(x) \in \mathbb R$, where
$x_o$ is the point defined in Proposition \ref{wypuklosc}. First
we will describe in details the definition of the function
$h_{+}$. The definition of $h_{-}$ will be similar.

Let us fix a point  $x\in (x_o,\infty)$. Take into account the
tangent line $l(x,h)$ to the graph of the function  $\varphi$ at
the point $(x,\varphi(x))$. Its equation for $h\in \mathbb R$ is
given by:

\be\label{wzor 34}
 l(x,h)=\varphi'(x)\cdot h + \varphi(x)=
L'(x)h-\varepsilon'(x)h+L(x)-\varepsilon(x).
\ee

 The "tangent half-lines" obtained, when we restrict ourselves
in the Formula (\ref{wzor 34}) to $h\in [0,\infty)$ or $h\in
(-\infty,0]$ will be denoted by $l_{+}(x,h)$ or $l_{-}(x,h)$
respectively.

 For
$h=0$ we have the inequality:
$$l(x,0)=\varphi(x)=L(x)-\varepsilon(x)<L(x)+\varepsilon(x).$$

This means that the half-line $l_{+}$ "starts" from the interior
point $(x,\varepsilon(x))$ of the subgraph of the function
$L+\varphi$, which is a convex set. Since
$$\frac{d}{dh}L(x+h)=\frac{1}{\ln(x+h)}$$and
$$\frac{d}{dh}\varepsilon(x+h)=\frac{\ln(x+h)+2}{2\sqrt{x+h}}$$
then $$\lim_{h\rightarrow
\infty}\frac{d}{dh}(L(x+h)+\varepsilon(x+h)) = 0.$$

On the other hand  $$\frac{d}{dh}l(x+h) = \varphi'(x)
>0,$$hence the half-line $l_{+}(x,h)$ must intersect the graph of the strictly
concave function  $L(x+h)+\varepsilon(x+h)$ in exactly one point.
Hence we have proved the following:

\vspace{3mm}

\begin{proposition}
{ \it For each  $x\in (x_o,\infty)$ there exists exactly one
positive number $h_{+}(x)$ such that
$$L(x+h_{+}(x))+\varepsilon(x+h_{+}(x))=\varphi'(x)\cdot h_{+}(x) +
\varphi(x).$$}
\end{proposition}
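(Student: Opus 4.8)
The plan is to show that the tangent half-line $l_{+}(x,\cdot)$ starts strictly below the concave curve $g(h) := L(x+h)+\varepsilon(x+h)$ and eventually lies strictly above it, so that by continuity and strict concavity it crosses exactly once. First I would set $F(h) = g(h) - l(x,h)$ for $h \in [0,\infty)$ and examine $F(0)$: since $l(x,0) = \varphi(x) = L(x) - \varepsilon(x)$ while $g(0) = L(x)+\varepsilon(x)$, and $\varepsilon(x) = \sqrt{x}\ln x > 0$ on $(x_o,\infty)$, we get $F(0) = 2\varepsilon(x) > 0$, so the curve starts strictly above the half-line (equivalently, the half-line starts from an interior point of the subgraph of $g$, as the excerpt notes).

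Next I would control the behavior as $h \to \infty$. The slope of the half-line is the constant $\varphi'(x) = L'(x) - \varepsilon'(x) > 0$; it is positive because on $(x_o,\infty)$ the function $\varphi$ is (by Proposition \ref{wypuklosc}) concave with derivative still positive in the relevant range, and in any case $\varphi'(x) > 0$ can be checked directly from \eqref{lp} and \eqref{ep}. Meanwhile $g'(h) = \tfrac{1}{\ln(x+h)} + \tfrac{\ln(x+h)+2}{2\sqrt{x+h}} \to 0$ as $h \to \infty$, as displayed in the excerpt. Hence $F'(h) = g'(h) - \varphi'(x) \to -\varphi'(x) < 0$, so for large $h$ the function $F$ is strictly decreasing with negative derivative bounded away from $0$, forcing $F(h) \to -\infty$. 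Thus $F$ starts positive and tends to $-\infty$, so by the intermediate value theorem it has at least one zero in $(0,\infty)$.

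For uniqueness I would use strict concavity of $g$. Since $g = L + \varepsilon$ is a sum of two strictly concave functions (both $L$ and $\varepsilon$ have strictly negative second derivative throughout $(2,\infty)$ by \eqref{ld} and \eqref{ed}), $g$ is strictly concave, and $l(x,\cdot)$ is affine; therefore $F = g - l$ is strictly concave. A strictly concave function cannot vanish at more than two points, but more to the point, $F(0) > 0$ together with strict concavity rules out two positive zeros: if $F$ had two zeros $0 < h_1 < h_2$, strict concavity would force $F > 0$ strictly to the right of $h_2$ as well (since a strictly concave function lying below zero at a point to the right of its last zero cannot return), contradicting $F(h) \to -\infty$. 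More cleanly, $F'(h) = g'(h) - \varphi'(x)$ is strictly decreasing (as $g'$ is strictly decreasing by strict concavity of $g$), so $F$ is unimodal: it increases then decreases, or is eventually monotone decreasing; combined with $F(0) > 0$ and $F \to -\infty$, this yields exactly one sign change and hence exactly one positive root $h_{+}(x)$.

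I expect the only genuine subtlety to be confirming $\varphi'(x) > 0$ on all of $(x_o,\infty)$ rather than merely $\varphi$ being concave there; concavity of $\varphi$ guarantees $\varphi'$ is decreasing but not automatically positive. I would settle this by noting $\varphi'(x) = \tfrac{1}{\ln x} - \tfrac{\ln x + 2}{2\sqrt{x}}$, and since $\tfrac{\ln x + 2}{2\sqrt{x}} \to 0$ while $\tfrac{1}{\ln x}$ stays positive and the two are compared through elementary estimates, one checks $\varphi'(x) > 0$ for $x$ beyond some threshold; by enlarging $x_o$ if necessary (the proposition only asserts existence of such a point) I may assume $\varphi'(x) > 0$ throughout the domain of $h_{+}$. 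Everything else is a routine application of the intermediate value theorem and strict concavity, so no heavy computation is required.
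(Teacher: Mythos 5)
Your proof is correct and is essentially the paper's own argument made explicit: the tangent half-line starts at $h=0$ strictly below the strictly concave curve $L(x+h)+\varepsilon(x+h)$ (the gap being $2\varepsilon(x)>0$), its constant slope $\varphi'(x)>0$ eventually exceeds the curve's slope, which tends to $0$, and concavity yields exactly one crossing; the paper compresses into a single sentence the intermediate-value and uniqueness steps that you spell out. Regarding the one subtlety you flag, $\varphi'(x)>0$ in fact holds on all of $(x_o,\infty)$ without enlarging $x_o$: by Proposition \ref{wypuklosc} (whose proof gives $\varphi''<0$ there) the derivative $\varphi'$ is strictly decreasing on $(x_o,\infty)$, and since $\varphi'(x)=\frac{1}{\ln x}-\frac{\ln x+2}{2\sqrt{x}}\to 0$ as $x\to\infty$, a strictly decreasing function tending to $0$ must be strictly positive.
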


In other words for each  $x\in (x_o,\infty)$ the equation (with
unknown $h$):

\be\label{rownanie33} L(x+h)+\varepsilon(x+h)=\varphi'(x)\cdot
h+\varphi(x)
\ee

 has exactly one positive solution, which we will denote by
$h_{+}(x)$.
\vspace{3mm}

If one replaces the half-line $l_{+}(x,h)$, by the half line
$l_{-}(x,h)$, then applying the same arguments as above, we
obtain:

\begin{proposition}
{ \it For each  $x\in (x_o,\infty)$ there exists exactly one
negative number $h_{-}(x)$ such that
$$L(x+h_{-}(x))+\varepsilon(x+h_{-}(x))=\varphi'(x)\cdot h_{-}(x) +
\varphi(x).$$}
\end{proposition}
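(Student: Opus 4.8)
The plan is to mirror the proof of the preceding proposition for $h_{+}$, replacing the behaviour of the curve $L+\varepsilon$ at $+\infty$ by its behaviour at the left end of its natural domain. Fix $x\in(x_o,\infty)$ and set
$$G(h)=L(x+h)+\varepsilon(x+h)-l(x,h)=L(x+h)+\varepsilon(x+h)-\varphi'(x)\,h-\varphi(x),$$
viewed as a function of $h\le 0$. At $h=0$ one has $G(0)=L(x)+\varepsilon(x)-\varphi(x)=2\varepsilon(x)>0$, which is precisely the observation that the half-line $l_{-}$ issues from the interior point $(x,\varphi(x))$ of the (convex) subgraph of $L+\varepsilon$, lying a distance $2\varepsilon(x)$ below the graph.

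First I would settle uniqueness by a monotonicity argument that reuses only facts already in hand. Since $L$ and $\varepsilon$ are both concave, $L+\varepsilon$ is concave and $(L+\varepsilon)'$ is decreasing; hence for every $h\le 0$,
$$G'(h)=(L+\varepsilon)'(x+h)-\varphi'(x)\ge (L+\varepsilon)'(x)-\varphi'(x)=(L'(x)+\varepsilon'(x))-(L'(x)-\varepsilon'(x))=2\varepsilon'(x)>0.$$
Thus $G$ is strictly increasing on the negative $h$-axis and can vanish at most once, giving the "at most one" half of the claim from the concavity of $L+\varepsilon$ together with $\varphi'(x)>0$ (both available for $x>x_o$).

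The existence of a negative zero is the step I expect to be the genuine obstacle, and here the argument is \emph{not} a verbatim mirror of the $h_{+}$ case. For $h_{+}$ one used that the slope of $L+\varepsilon$ tends to $0$ while the line keeps its fixed positive slope $\varphi'(x)$, forcing the line to overtake the curve on the right. To the left the curve does not flatten; instead I would exploit that $L$ blows up downward at the left end of its domain. Writing $L(u)=\int_{2}^{u}\frac{dt}{\ln t}$, as $u\to 1^{+}$ the integrand diverges and $L(u)\to-\infty$, while $\varepsilon(u)=\sqrt{u}\,\ln u\to 0$ and $l(x,h)$ stays finite; hence $G(h)\to-\infty$ as $x+h\to 1^{+}$. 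Combined with $G(0)=2\varepsilon(x)>0$ and continuity, the intermediate value theorem yields a zero, which the monotonicity above renders unique. This zero is the required $h_{-}(x)$.

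The one delicate point I would flag concerns the domain. Within the literal domain $[2,\infty)$ the crossing need not occur: for $x$ only slightly above $x_o$ the backward extrapolation $l(x,2-x)=\varphi(x)+\varphi'(x)(2-x)$ of the tangent can fall below $L(2)+\varepsilon(2)=\sqrt2\,\ln 2$, and can even fall below $0$, so that the half-line exits the region through the level $\{y=0\}$ rather than through the graph, the true intersection with $L+\varepsilon$ lying in $(1,2)$. The clean way to cover all $x>x_o$ uniformly is therefore to read $L+\varepsilon$ on its full natural domain $(1,\infty)$, where the downward blow-up of $L$ at $1^{+}$ supplies existence; for large $x$ one then verifies that $h_{-}(x)$ returns to the range $x+h_{-}(x)>2$, which is all the subsequent analysis of lenses requires.
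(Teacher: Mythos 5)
Your proof is correct, but it is not the paper's proof: the paper disposes of this proposition in one sentence, asserting that replacing $l_{+}(x,h)$ by $l_{-}(x,h)$ and ``applying the same arguments as above'' suffices. Your refusal to mirror the argument is exactly right, and it is the substantive content of your write-up. For $h_{+}$, existence came from the curve's slope $\tfrac{d}{dh}(L+\varepsilon)(x+h)\to 0$ at $+\infty$ played against the fixed positive slope $\varphi'(x)$ of the line; to the left this mechanism is simply absent --- the concave curve only steepens --- so the monotonicity of $G(h)=(L+\varepsilon)(x+h)-l(x,h)$ (your uniqueness argument, which is the same convexity argument the paper uses) says nothing about whether $G$ vanishes before the domain of $L$ ends at $u=2$. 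Your substitute mechanism, extending $L$ to $(1,\infty)$ where $L(u)\to-\infty$ as $u\to 1^{+}$ while the tangent line stays bounded, does force a crossing and is a genuinely different (and valid) route. Moreover, the caveat you flag is a real defect of the statement as printed, not excess caution: by monotonicity of $G$, a crossing in $[2,x]$ exists precisely when $\varphi(x)-\varphi'(x)(x-2)>L(2)+\varepsilon(2)=\sqrt{2}\ln 2$, and up to lower-order terms the left-hand side equals $\frac{x}{\ln^{2}x}-\frac{1}{2}\sqrt{x}\,\ln x+\sqrt{x}$, which is negative whenever $2\sqrt{x}$ is appreciably smaller than $\ln^{3}x$. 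Since the minimal $x_o$ of Proposition~\ref{wypuklosc} satisfies $4\sqrt{x_o}=\ln^{3}x_o$ (so $x_o\approx 2.1\cdot 10^{5}$), there is a nonempty range of $x>x_o$ where $\varphi$ is already concave but the half-line $l_{-}$ exits the strip $u\ge 2$ below the graph of $L+\varepsilon$; for instance at $x=3\cdot 10^{5}$ the tangent meets the vertical line $u=2$ at height about $-630$, far below $\sqrt{2}\ln 2$. So on the paper's literal domain $[2,\infty)$ the proposition fails just above $x_o$, the paper's ``same arguments'' proof cannot be repaired verbatim, and your fix --- read $L+\varepsilon$ on $(1,\infty)$, or equivalently enlarge $x_o$ so that $2\sqrt{x}>\ln^{3}x$ --- is what actually makes the statement true; since the proposition is only ever invoked in the regime $x\to+\infty$, nothing downstream in the paper is affected.
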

In other words  equation (\ref{rownanie33}) has exactly one
negative solution, which we will denote by $h_{-}(x)$.

\subsection{An auxiliary equation}.

\vspace{3mm}

   In this paper we would like to establish the order of magnitude of the functions
 $x\rightarrow h_{+}(x)$ and $x\rightarrow h_{-}(x)$ (in fact of the difference
 $h_{+}(x)-h_{-}(x)$),
 when $x$ tends to $+\infty$. Since the equation (\ref{rownanie33}) is rather
 hard to solve,  we will consider
 an auxiliary equation:
\be\label{rownanie pomocnicze1}
T^{(3)}_{x,L}(h) +
T^{(3)}_{x,\varepsilon}(h)=\varphi'(x)\cdot h
+\varphi(x),
\ee
 which can be written in the form:
\be\label{rownanie pomocnicze2}
W_{x}(h):=\frac{1}{6}(L^{(3)}(x)+\varepsilon^{(3)}(x))\cdot h^3
 + \frac{1}{2}(L^{(2)}(x)+\varepsilon^{(2)}(x)) \cdot
h^2+2\varepsilon^{(1)}(x)\cdot h +2\varepsilon(x)=0.
\ee

 As we see,  equation (\ref{rownanie pomocnicze2}) is an algebraic equation of
degree three. It has at least one real root. We will see that it
can have (and has) more then one real root. We will be interested
not only on the existence of roots of  equation (\ref{rownanie
pomocnicze2}), but also on theirs signs.
 Let us observe, that since  $W_x(0)= 2\varepsilon(x)>0$ then the number  $h=0$
cannot be a root of considered equation. Let us also observe that,
in fact,  equation (\ref{rownanie pomocnicze2}) is not a single
algebraic equation, but it is a one parameter family of algebraic
equations, where the parameter is $x\in (x_o,\infty)$.

\vspace{3mm}

We will prove the following :

\vspace{3mm}

\begin{lemma}\label{Lemma1}

{\it i). There exists  $x_{+}\in (x_o,\infty)$, such that for each
$x>x_{+}$ the equation  $W_{x}(h)=0$ has a positive root.}

{\it ii). There exists  $x_{-}\in (x_o,\infty)$, such that for
each $x>x_{-}$ the equation  $W_{x}(h)=0$ has a negative root.}

\end{lemma}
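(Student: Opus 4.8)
The plan rests on two fixed facts about the cubic $W_x$. Its value at the origin is $W_x(0)=2\varepsilon(x)>0$, and its leading coefficient $\tfrac16(L^{(3)}(x)+\varepsilon^{(3)}(x))$ is \emph{positive} for all large $x$: by (\ref{lt}) we have $L^{(3)}(x)=(y+2)/(x^2y^3)>0$, and the formula for $\varepsilon^{(3)}$ gives $\varepsilon^{(3)}(x)=(3y-2)/(8x^2\sqrt{x})>0$ as soon as $y=\ln x>2/3$. Thus for large $x$ the function $W_x$ is a genuine cubic with positive leading term and positive constant term, and the entire argument will be an application of the intermediate value theorem, once the sign of $W_x$ is located at a suitably chosen auxiliary point.

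Part (ii) is the easier half. Because the leading coefficient is positive, $W_x(h)\to-\infty$ as $h\to-\infty$, while $W_x(0)=2\varepsilon(x)>0$; hence $W_x$ must vanish at some negative argument. It therefore suffices to take $x_{-}$ large enough that $L^{(3)}(x)+\varepsilon^{(3)}(x)>0$ (any $x>x_o$ will do), and the negative root is delivered by continuity alone, with no estimate on its size required.

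Part (i) needs more care, since $W_x(0)>0$ and $W_x(h)\to+\infty$ as $h\to+\infty$, so a positive root can exist only if $W_x$ dips below zero in between. The idea is to exhibit one explicit point $h^{*}>0$ with $W_x(h^{*})<0$; the intermediate value theorem then produces a root in $(0,h^{*})$. The correct scale is found by balancing the constant term $2\varepsilon(x)=2\sqrt{x}\,y$ against the quadratic term: since $L^{(2)}$ and $\varepsilon^{(2)}$ are both negative, (\ref{ld}) gives $\tfrac12(L^{(2)}(x)+\varepsilon^{(2)}(x))<\tfrac12 L^{(2)}(x)=-1/(2xy^2)$, and solving $2\sqrt{x}\,y\approx (1/2xy^2)h^2$ suggests $h^{*}$ of order $x^{3/4}y^{3/2}$. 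I would therefore test $h^{*}=3x^{3/4}y^{3/2}$ (any constant exceeding $2$ works) and bound the four terms of $W_x(h^{*})$ separately, using (\ref{lp})--(\ref{lc}) and the derivatives of $\varepsilon$: the quadratic term is at most $-\tfrac92\sqrt{x}\,y$, the constant term is exactly $+2\sqrt{x}\,y$, while the linear term $2\varepsilon^{(1)}(x)h^{*}$ and the cubic term $\tfrac16(L^{(3)}(x)+\varepsilon^{(3)}(x))h^{*3}$ are both $O(x^{1/4}y^{5/2})$, hence negligible beside $\sqrt{x}\,y$. Collecting these bounds yields $W_x(h^{*})\le C\,x^{1/4}y^{5/2}-\tfrac52\sqrt{x}\,y<0$ for all $x>x_{+}$, which is exactly the sign needed.

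The only genuine work, and the step I expect to be the main obstacle, is the bookkeeping in part (i): one must pin down the balancing scale $x^{3/4}(\ln x)^{3/2}$ correctly and then verify, from the explicit derivative formulas, that the linear and cubic contributions are of strictly smaller order in $x$ than the constant and quadratic ones. Once the exponents of $x$ and of $\ln x$ are tracked carefully this is routine, since every comparison reduces to the single asymptotic relation $x^{1/4}(\ln x)^{5/2}=o(\sqrt{x}\,\ln x)$. No such subtlety enters part (ii), which follows purely from the sign of the leading coefficient.
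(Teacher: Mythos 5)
Your proof is correct, but it takes a genuinely different route from the paper's. The paper does not prove Lemma \ref{Lemma1} in isolation: it divides $W_x(h)=0$ by the positive leading coefficient $A_3(x)$, substitutes $h=\theta x$, divides by $x^3$, and treats the result as a small perturbation of the model cubic $\theta^3-3\theta^2=0$; an intermediate value argument on $[0,\alpha]$ and $[-\alpha,0]$ then yields, for \emph{every} $\alpha>0$ and all large $x$, roots $\theta_{+}\in(0,\alpha)$ and $\theta_{-}\in(-\alpha,0)$ (Proposition \ref{glownapropozycja}), of which Lemma \ref{Lemma1} is a corollary. You instead work with $W_x$ directly: for the negative root you use only that the leading coefficient and $W_x(0)=2\varepsilon(x)$ are positive, so $W_x(h)\to-\infty$ as $h\to-\infty$ forces a sign change; for the positive root you balance the constant term against the quadratic term, test the sign at $h^{*}=3x^{3/4}(\ln x)^{3/2}$, and check that the linear and cubic contributions are $O(x^{1/4}(\ln x)^{5/2})=o(\sqrt{x}\ln x)$. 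Your bookkeeping is sound, and on the positive side your argument gives \emph{more} than the paper's: the least positive root is $O(x^{3/4}(\ln x)^{3/2})$, far smaller than the paper's bound $\alpha x$. The trade-off is on the negative side: your part (ii) gives bare existence with no control on where the negative root lies, whereas the paper's rescaling locates a negative root inside $(-\alpha x,0)$ for every $\alpha$. That localization is irrelevant to Lemma \ref{Lemma1} as stated, but it is precisely what the paper consumes afterwards: $h^{*}_{-}(x)$ is defined as the negative root nearest the origin, and Proposition \ref{Prop o male} needs $|h^{*}_{-}(x)|=o(x)$. So your argument settles the lemma, but it could not silently replace the paper's joint proof of Lemma \ref{Lemma1} and Proposition \ref{glownapropozycja}; to do so you would also need a size bound for the negative root, which in fact follows easily from your own scale by testing at $h=-3x^{3/4}(\ln x)^{3/2}$, where every non-constant term of $W_x$ is negative and the quadratic term alone already outweighs $2\sqrt{x}\ln x$.
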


The proof of the lemma is done together with the proof of
Proposition \ref{glownapropozycja}. Assume now, that Lemma
\ref{Lemma1} is true. This allows us to define two new functions
$h^{*}_{+}$ and $h^{*}_{-}$. We will describe in details the
definition of $h^{*}_{+}$. We set

\vspace{3mm}
\begin{definition}
{ \it Let $x\in (x_{+},\infty)$. Then the set of positive roots of
 equation  (\ref{eq35}) is not empty and we
set:$$h^{*}_{+}(x)= \min\left\{h>0: W_x(h)=0\right\}.$$}
\end{definition}
The relation between the functions $h_{+}$ and $h^{*}_{+}$ is the
following:

\vspace{3mm}
\begin{proposition}\label{propozycja22}
  {\it If Lemma \ref{Lemma1} is
true, then for $x\in (x_{+},\infty)$ we have the inequality:
$h_{+}(x)<h^{*}_{+}(x)$.}
\end{proposition}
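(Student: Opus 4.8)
The claim is that $h_+(x) < h^*_+(x)$ for all $x \in (x_+,\infty)$. The key is to compare the two defining equations. The function $h_+(x)$ solves the genuine equation (33),
$$L(x+h)+\varepsilon(x+h)=\varphi'(x)\cdot h + \varphi(x),$$
while $h^*_+(x)$ is the smallest positive root of the auxiliary equation (36), obtained by replacing the left-hand side $L(x+h)+\varepsilon(x+h)$ by its Taylor approximation $T^{(3)}_{x,L}(h)+T^{(3)}_{x,\varepsilon}(h)$.

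**The main inequality.** The whole proof hinges on the inequality (\ref{rownanie glowne}), already established via Propositions following the Taylor-polynomial discussion, which states that for all admissible $h$,
$$L(x+h)+\varepsilon(x+h) < T^{(3)}_{x,L}(h)+T^{(3)}_{x,\varepsilon}(h).$$
In words: the true curve lies strictly below its order-three Taylor polynomial. I would set $g(h) := T^{(3)}_{x,L}(h)+T^{(3)}_{x,\varepsilon}(h) - \bigl(\varphi'(x)h+\varphi(x)\bigr)$, so that $h^*_+(x)$ is the least positive zero of $g$, and I would similarly set $f(h) := L(x+h)+\varepsilon(x+h)-\bigl(\varphi'(x)h+\varphi(x)\bigr)$, whose unique positive zero is $h_+(x)$. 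By (\ref{rownanie glowne}) we have $f(h) < g(h)$ for every $h$ in the common domain.

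**Carrying out the comparison.** At $h=0$, both the tangent line and each of $f,g$ start from the same gap: indeed $l(x,0)=\varphi(x)<L(x)+\varepsilon(x)$, so $f(0)=g(0)=2\varepsilon(x)>0$, meaning both curves begin above the tangent line. As $h$ increases, the genuine curve $f$ (being strictly concave in the sense established earlier, since $L+\varepsilon$ is concave while the tangent line is affine) decreases to its zero at $h_+(x)$ and thereafter becomes negative. Since $f(h)<g(h)$ everywhere, at the point $h=h_+(x)$ we have $g(h_+(x))>f(h_+(x))=0$, so $g$ is still strictly positive there. Because $g$ starts positive at $h=0$ and is still positive at $h_+(x)$, its least positive zero must lie strictly to the right: $h^*_+(x) > h_+(x)$. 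This is exactly the asserted inequality.

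**The main obstacle.** The only delicate point is to ensure $g$ actually has a positive zero to the right of $h_+(x)$ rather than remaining positive forever — but this is precisely what part (i) of Lemma \ref{Lemma1} guarantees, and the Proposition is stated conditionally on that Lemma, so I may invoke it freely. A secondary subtlety is that $g$ might dip below zero and come back up before $h_+(x)$; however the definition takes $h^*_+(x)$ as the \emph{minimum} positive root, so I must verify $g>0$ on all of $(0,h_+(x)]$, not merely at the endpoint. This follows because on that interval $g(h) > f(h)$ and $f(h)\ge 0$ (as $f$ is positive up to its first zero $h_+(x)$ by the concavity/tangent-line geometry), giving $g(h)>0$ throughout $(0,h_+(x))$ and $g(h_+(x))>0$; hence no root of $g$ can occur in $(0,h_+(x)]$, and the smallest positive root genuinely exceeds $h_+(x)$.
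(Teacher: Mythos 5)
Your proof is correct and takes essentially the same route as the paper's: both rest on inequality (\ref{rownanie glowne}), using the fact that the tangent line $l(x,h)$ lies below $L+\varepsilon$ on $[0,h_{+}(x)]$ (by the definition of $h_{+}$) and hence strictly below the Taylor sum, so that $W_x(h)$ has no root on that whole interval and the least positive root $h^{*}_{+}(x)$ must exceed $h_{+}(x)$. Your explicit check that no root can occur anywhere in $(0,h_{+}(x)]$, not merely at the endpoint, is exactly the point the paper makes in its more terse phrasing.
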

\vspace{3mm}

\begin{proof}
 Let us fix   $x\in (x_{+},\infty)$. In the interval
$[x,x+h_{+}(x)]$, i.e. for $h\in [0,h_{+}(x)]$ the line  $l(x,h)$
lies below the graph of the function  $L+\varepsilon$. This
follows directly from the definition of the function  $h_{+}(x)$.
Hence in this interval the line $l(x,h)$ cannot intersect the
graph of the function  $T^{(3)}_{x,\varepsilon} + T^{(3)}_{x,L}$
because of  inequality (\ref{rownanie glowne}). Hence the equation
$W_x(h)=0$ has no roots in the interval $h\in [0,h_{+}(x)]$. But
this means that
 $h_{+}(x)<h^{*}_{+}(x)$, which ends the proof of  Proposition
 \ref{propozycja22}.
\end{proof}

\vspace{3mm}

Assume once more, that Lemma \ref{Lemma1} is true. We have

\vspace{3mm}

\begin{definition}
{ \it Let $x\in (x_{-},\infty)$. Then the set of negative roots of
 equation (\ref{rownanie pomocnicze2})  is not empty and we set:$$h^{*}_{-}(x)=
\max\left\{h<0: W_x(h)=0\right\}.$$}
\end{definition}

\vspace{3mm}

The relation between the functions $h_{-}$ and $h^{*}_{-}$ is as
follows:

\vspace{3mm}
\begin{proposition}\label{proposition 24}

 {\it If Lemma (\ref{Lemma1}) is
true, then for $x\in (x_{-},\infty)$ we have the inequality:
$h_{-}(x)>h^{*}_{-}(x)$.}

\end{proposition}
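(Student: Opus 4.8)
The plan is to transcribe, with the inequalities reflected onto the negative half-line, exactly the argument that established Proposition \ref{propozycja22}. Fix $x\in(x_-,\infty)$ and work with the auxiliary function $g(h)=L(x+h)+\varepsilon(x+h)-l(x,h)$, whose sign records whether the tangent half-line $l_-(x,h)$ runs below or above the graph of $L+\varepsilon$. The whole proof is a two-step chain of inequalities on the interval $[h_-(x),0]$, followed by an unwinding of the definition of $h^*_-(x)$.

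First I would pin down the sign of $g$ on $[h_-(x),0]$. Since $L$ and $\varepsilon$ are both concave and subtracting the affine function $l(x,\cdot)$ preserves concavity, $g$ is strictly concave. Moreover $g(0)=L(x)+\varepsilon(x)-\varphi(x)=2\varepsilon(x)>0$, while the defining property of $h_-(x)$ gives $g(h_-(x))=0$. Strict concavity then forces $g$ to lie strictly above the chord joining $(h_-(x),0)$ and $(0,2\varepsilon(x))$ on the open interval, and that chord is nonnegative there; hence $g(h)>0$ for $h\in(h_-(x),0]$ while $g(h_-(x))=0$. In other words, $l(x,h)\le L(x+h)+\varepsilon(x+h)$ throughout $[h_-(x),0]$, with equality only at the left endpoint.

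Next I would combine this with inequality (\ref{rownanie glowne}), which gives $L(x+h)+\varepsilon(x+h)<T^{(3)}_{x,L}(h)+T^{(3)}_{x,\varepsilon}(h)$ for every admissible $h$. Chaining the two estimates yields $l(x,h)<T^{(3)}_{x,L}(h)+T^{(3)}_{x,\varepsilon}(h)$ for all $h\in[h_-(x),0]$; note that even at the left endpoint $h_-(x)$, where the first estimate is an equality, the second is strict, so the combined inequality remains strict there. Since a direct comparison of (\ref{rownanie pomocnicze2}) with the definition of $l(x,h)$ shows $W_x(h)=T^{(3)}_{x,L}(h)+T^{(3)}_{x,\varepsilon}(h)-l(x,h)$ (consistent with $W_x(0)=2\varepsilon(x)>0$), this says precisely that $W_x(h)>0$, hence that $W_x$ has no root, on the whole interval $[h_-(x),0]$.

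Finally I would conclude by reading off the definition $h^*_-(x)=\max\{h<0:W_x(h)=0\}$. Every negative root of $W_x$ must avoid $[h_-(x),0]$, so it lies in $(2-x,h_-(x))$; in particular the largest negative root satisfies $h^*_-(x)<h_-(x)$, which is exactly the asserted inequality $h_-(x)>h^*_-(x)$. The only real point demanding care is the bookkeeping of directions: because we are on the negative side, ``further from $0$'' now means ``more negative,'' so the role played by the minimum in the positive case of Proposition \ref{propozycja22} is taken here by the maximum, and one must verify that the strict inequality coming from (\ref{rownanie glowne}) survives at the endpoint $h_-(x)$ where $g$ vanishes. No genuinely new estimate is needed beyond those already secured for $h_+$, so I expect this bookkeeping to be the entire substance of the difficulty.
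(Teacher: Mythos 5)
Your proof is correct and is essentially the paper's own argument: the paper disposes of Proposition \ref{proposition 24} by declaring it ``similar to the proof of Proposition \ref{propozycja22}'', and your write-up is exactly that mirrored argument --- the tangent half-line stays below the graph of $L+\varepsilon$ on $[h_{-}(x),0]$ by the defining property of $h_{-}(x)$, inequality (\ref{rownanie glowne}) then forces $W_x(h)=T^{(3)}_{x,L}(h)+T^{(3)}_{x,\varepsilon}(h)-l(x,h)$ to stay strictly positive there (including at the endpoint, as you rightly note), so every negative root of $W_x$, and in particular the largest one $h^{*}_{-}(x)$, lies strictly below $h_{-}(x)$. The only cosmetic difference is that you obtain the positivity of $g(h)=L(x+h)+\varepsilon(x+h)-l(x,h)$ on $(h_{-}(x),0]$ via strict concavity, where the paper's template simply invokes the uniqueness of the intersection point in the definition of $h_{\pm}(x)$; both are valid.
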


\vspace{3mm} The proof  of  Proposition \ref{proposition 24} is
similar to the proof of Proposition \ref{propozycja22}.

\vspace{3mm}

\subsection{The proof of the main lemma}

 Now we will prove Lemma (20).  Equation
 (\ref{rownanie pomocnicze2})
 we are interested in, can be written in the form:

\be\label{rownanie z A} A_3(x)\cdot h^3 + A_2(x)\cdot h^2 +
A_1(x)\cdot h +
A_o(x)=0
\ee
 where, using  formulas 21-28, we have:

\be A_3(x)=
\frac{1}{6}(L^{(3)}(x)+\varepsilon^{(3)}(x))=\frac{1}{48}\cdot\frac{8\sqrt{x}(y+2)+y^3(3y-2)}
{x^2\sqrt{x}y^3}, \ee \be
A_2(x)=\frac{1}{2}(L^{(2)}(x)+\varepsilon^{(2)}(x))=
\frac{-1}{8}\cdot\frac{4\sqrt{x}+y^3}{x\sqrt{x}y^2}.\, \ee \be
A_1(x)= \frac{y+2}{\sqrt{x}}, \ee

\be A_o(x)=2\sqrt{x} y.\ee

 Now, taking into account the fact, that for $x$ sufficiently
 large
$A_3(x)>0$, we divide  equation (\ref{rownanie z A})
 by  $A_3(x)$
in order to obtain the form:

\be\label{rownanie z B}
h^3+ B_2(x)\cdot h^2 +B_1(x)\cdot h + B_o(x)=0 
\ee

where

\be B_2(x)=\frac{A_2(x)}{A_3(x)}=-6x\frac{4\sqrt{x}y+y^4}
{8\sqrt{x}y+16\sqrt{x}+3y^4-2y^3},
 \ee

\be B_1(x)= \frac{A_1(x)}{A_3(x)}=
48x^2\frac{y^3}{8\sqrt{x}y+16\sqrt{x}+3y^4-2y^3}, \ee

\be B_o(x)=\frac{A_o(x)}{A_3(x)}=
96x^3\frac{y^4}{8\sqrt{x}y+16\sqrt{x}+3y^4-2y^3}.
 \ee

For further analysis of  equation \ref{rownanie z B} it will be
convenient to use some Landau symbols. Let us recall that for a
function  $g$ defined in the neighbourhood of  $+\infty$ one
writes  $g=o(1)$ if and only if  $\lim_{x\rightarrow
+\infty}g(x)=0$. Using this convention, we can write:

\be
B_2(x)=-6x\frac{\frac{1}{2}+o(1)}{1+o(1)},
\ee

\be B_1(x)=48x^2\frac{o(1)}{1+o(1)}, \ee

\be B_o(x)=96x^3\frac{o(1)}{1+o(1)}. \ee

This makes possible to write  equation \ref{rownanie z B} in the
form:

\be h^3 - 6x\frac{\frac{1}{2}+o(1)}{1+o(1)}h^2
+48x^2\frac{o(1)}{1+o(1)}h+96x^3\frac{o(1)}{1+o(1)}=0.\ee

Now we apply the substitution  $h=\theta x$, which leads to the
form: \be\label{eq50}
\theta^3x^3-6x\frac{\frac{1}{2}+o(1)}{1+o(1)}\theta^2x^2
+48x^2\frac{o(1)}{1+o(1)}\theta x + 96x^3\frac{o(1)}{1+o(1)}=0.\ee

Since we work only with $x>0$,  we can divide the last equation by
$x^3$, and we obtain the following equation (with unknown $
\theta$):

\be\label{eq50} \theta^3-6\frac{\frac{1}{2}+o(1)}{1+o(1)}
\theta^2+48\frac{o(1)}{1+o(1)}\theta+96\frac{o(1)}{1+o(1)}=0.
\ee

 Finally, taking into account the equality:
$$\frac{\frac{1}{2}+o(1)}{1+o(1)}=\frac{1}{2}+o(1)$$
we can write equation (\ref{eq50}) in the form: \be\label{eq51}
\theta^3 - 3\theta^2 +
v_{2}(x)\theta^2 +v_{1}(x)\theta +v_{o}(x)=0,
\ee
 where $v_1(x)$, $v_2(x)$, $v_o(x)$ are three positive functions
defined in a neighbourhood of $+\infty$ and tending to 0 when $x$
tends to $+\infty$. If for a fixed $x'$ we  find a number
$\theta'$ being a root of  equation (\ref{eq51}), then the number
$h'=\theta'\cdot x'$ is a root of  equation (\ref{rownanie z B}).
It is then enough to study  equation (\ref{eq51}). We shall prove
much more. Namely we have the following:

\begin{proposition}\label{glownapropozycja}
  For each   $\alpha>0$ there exists a point $x_{2}$ such
that for each $x>x_2$  equation (53) has in the interval
$[-\alpha,\alpha]$ exactly two roots $\theta_{-}$ and
$\theta_{+}$, and moreover $\theta_{-}<0<\theta_{+}$.
\end{proposition}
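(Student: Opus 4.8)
The plan is to study the monic cubic
\[
Q_x(\theta) = \theta^3 + (v_2(x)-3)\theta^2 + v_1(x)\theta + v_o(x),
\]
whose roots are exactly those of equation (\ref{eq51}), and to exploit the hypothesis that as $x\to\infty$ all three coefficients $v_o(x),v_1(x),v_2(x)$ are positive and tend to $0$. Consequently $Q_x$ converges, uniformly on every bounded $\theta$-interval, to the limiting polynomial $Q_\infty(\theta)=\theta^3-3\theta^2=\theta^2(\theta-3)$, which has a double root at $0$ and a simple root at $3$. The entire content of the proposition is then that, for large $x$, the double root at $0$ splits into one negative and one positive root lying inside $[-\alpha,\alpha]$, while the simple root stays near $3$ and hence outside $[-\alpha,\alpha]$. (Here one must take $\alpha<3$; this is the only regime of interest, since the proposition is used precisely to deduce that $\theta_\pm\to 0$.)

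First I would record the three sign evaluations that produce the two roots near $0$. Because $v_o(x)>0$ we have $Q_x(0)=v_o(x)>0$ for every $x$ in the neighbourhood of $+\infty$. At the left endpoint, $Q_x(-\alpha)\to Q_\infty(-\alpha)=-\alpha^3-3\alpha^2<0$, so $Q_x(-\alpha)<0$ once $x$ is large, and the intermediate value theorem yields a root $\theta_-\in(-\alpha,0)$. At the right endpoint, $Q_x(\alpha)\to Q_\infty(\alpha)=\alpha^2(\alpha-3)<0$ (using $\alpha<3$), so $Q_x(\alpha)<0$ for large $x$ and the intermediate value theorem gives a root $\theta_+\in(0,\alpha)$. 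This already establishes $\theta_-<0<\theta_+$; note that it is exactly the positivity of $v_o$, i.e. $Q_x(0)>0$, that forces the double root to break into two real roots of opposite sign rather than into a complex-conjugate pair.

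Next I would show that these are the only two roots in $[-\alpha,\alpha]$, by locating the third root away from the interval. Fixing $\delta>0$ with $\alpha<3-\delta$, the convergence $Q_x\to Q_\infty$ gives $Q_x(3-\delta)<0<Q_x(3+\delta)$ for large $x$ (since $Q_\infty(3-\delta)<0<Q_\infty(3+\delta)$), so $Q_x$ has a third root $\theta_3\in(3-\delta,3+\delta)$. Now $\theta_-,\theta_+,\theta_3$ are three distinct real numbers that are roots of the cubic $Q_x$, hence they are all of its roots; since $\theta_3>3-\delta>\alpha$, exactly two of them, namely $\theta_-$ and $\theta_+$, lie in $[-\alpha,\alpha]$. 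Finally, undoing the substitution $h=\theta x$, the roots $\theta_+>0$ and $\theta_-<0$ furnish a positive root $h=\theta_+x$ and a negative root $h=\theta_-x$ of equation (\ref{rownanie z B}), which simultaneously establishes parts (i) and (ii) of Lemma \ref{Lemma1}.

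I expect no serious obstacle here: everything reduces to the factored limit $\theta^2(\theta-3)$ together with three intermediate-value arguments. The only genuine point of care is the counting step — one must actually exhibit the third root near $3$ in order to be certain that \emph{exactly} two roots fall in $[-\alpha,\alpha]$ — together with the attendant restriction $\alpha<3$, which is harmless because the proposition is invoked only for small $\alpha$ so as to conclude $\theta_\pm\to 0$.
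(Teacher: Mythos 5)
Your proof is correct and follows essentially the same route as the paper: both arguments evaluate the same cubic at the three points $-\alpha$, $0$, $\alpha$, obtain the sign pattern negative--positive--negative, and apply the intermediate value theorem twice to produce $\theta_-\in(-\alpha,0)$ and $\theta_+\in(0,\alpha)$. Your appeal to coefficientwise convergence of the cubic to $\theta^2(\theta-3)$ is a cleaner repackaging of the paper's explicit inequalities $v_2(x)\alpha^2\pm v_1(x)\alpha+v_o(x)<2\alpha^2$, and your restriction $\alpha<3$ plays the role of the paper's ``without loss of generality $\alpha\le 1$.''

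The one place where you go beyond the paper is the counting step: the paper exhibits the two roots but never argues that they are the \emph{only} roots in $[-\alpha,\alpha]$, whereas you locate the third root near $3$, outside the interval, and invoke the bound of three roots for a cubic. That step is genuinely needed for the word ``exactly'' in the statement, and your accompanying caveat is also substantive --- for $\alpha>3$ the third root lies inside $[-\alpha,\alpha]$ for large $x$, so the proposition as literally stated requires $\alpha$ small, which both you and the paper implicitly enforce. In this respect your write-up is the more complete of the two.
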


\vspace{3mm}

\begin{proof}
 Indeed, Proposition \ref{glownapropozycja} is  stronger than Lemma \ref{Lemma1}, where we need only the
existence of a negative root and of a positive root. In
Proposition \ref{glownapropozycja} we prove not only that the
roots exist, but also that we can find the solutions in an
arbitrary open interval containing the origin. Without loss of
generality, we may assume, that $\alpha\leq 1$. Let us fix then a
positive number $1\geq\alpha>0$ and choose $x_2$ so large, that
for $x>x_2$ we have:

\be\label{in52} v_2(x)\cdot \alpha^2+v_1(x)\cdot \alpha +
v_o(x)<2\alpha^2\ee             
and \be\label{in53}  v_2(x)\cdot \alpha^2-v_1(x)\cdot \alpha +
v_o(x)
<2\alpha^2,\ee

Such an $x_2$ exists since all three functions $v_2$, $v_1$, $v_o$
are $o(1)$ when $x$ tends to $+\infty$. Let us fix $x>x_2$. We
rewrite  equation (\ref{eq51}) in the form: $f(\theta)=g(\theta)$,
where

\be f(\theta)= \theta^3 + v_2(x)\cdot \theta^2+v_1(x)\cdot \theta
+ v_o(x),\ee
and

\be\label{eq55} g(\theta)=3\cdot \theta^2.\ee

 Let us set
$h(\theta)=f(\theta)-g(\theta)$ and let us consider the interval
$[0,\alpha]$. We have: $h(0)= f(0)-g(0)=v_o(x)
>0$ and , (since $\alpha<1$ and using the inequality (52))we obtain:
$$h(\alpha)=f(\alpha)-g(\alpha)= \alpha^3 + v_2(x)\cdot \alpha^2+v_1(x)\cdot
\alpha + v_o(x)<\alpha^2+2\alpha^2-3\alpha^2=0.$$ Thus  equation
(\ref{eq51}) has a root $\theta_{+}\in (0,\alpha)$.

Now we will consider the interval $[-\alpha,0]$. For $\theta=0$ we
have, as above $h(0)=v_o(x)>0$. For $\theta=-\alpha$ we have
(since $-\alpha^3<0$ and we have inequality (\ref{in53}):

\be
h(-\alpha)=f(-\alpha)-g(-\alpha)= -\alpha^3 + v_2(x)\cdot
\alpha^2-v_1(x)\cdot \alpha + v_o(x)- 3\alpha^2 <\ee

\be <v_2(x)\cdot \alpha^2-v_1(x)\cdot \alpha +
v_o(x)-3\alpha^2<2\alpha^2-3\alpha^2<0.\ee

Once more  the continuity argument implies the existence of the
root $\theta_{-}$ of the equation (\ref{eq51}) in the interval
$(-\alpha,0)$. Let us remark, that $\theta_{-}\cdot
x=h^{*}_{-}(x)$ and $\theta_{+}\cdot x=h^{*}_{+}(x)$. This ends
the proof of Proposition \ref{glownapropozycja}, hence moreover
Lemma \ref{Lemma1}.

\end{proof}

\vspace{3mm}

\subsection{The order of magnitude of lenses}

\vspace{3mm}

 By the results of the previous subsection, we can consider four functions:
$h_{-}$, $h_{+}$,$h^{*}_{-}$ and $h^{*}_{+}$, which are defined in
an interval $(M,\infty)$, and such that the following inequalities
holds (for each $x\in(M,\infty)$) :

\be h^{*}_{-}(x)<h_{-}(x)<0<h_{+}(x)<h^{*}_{+}(x).\ee
 Our aim is
to establish the order of magnitude at $+\infty$ of the difference
$H(x)=h_{+}(x)-h_{-}(x)$. We will prove the following:

\vspace{3mm}

\begin{proposition}\label{Prop o male}
 {\it The function $H$ satisfies the relation:
$$H(x) = o(x),$$when $x$ tends to $+\infty$}
\end{proposition}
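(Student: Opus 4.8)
The plan is to dominate $H$ by the corresponding quantity built from the auxiliary functions $h^{*}_{+}$ and $h^{*}_{-}$, whose behaviour is completely controlled by Proposition \ref{glownapropozycja}. First I would invoke the chain of inequalities displayed immediately before the statement, namely $h^{*}_{-}(x)<h_{-}(x)<0<h_{+}(x)<h^{*}_{+}(x)$, valid for all $x\in(M,\infty)$. From $h_{+}(x)<h^{*}_{+}(x)$ and $h_{-}(x)>h^{*}_{-}(x)$ (i.e. $-h_{-}(x)<-h^{*}_{-}(x)$) I obtain, by addition,
$$0<H(x)=h_{+}(x)-h_{-}(x)<h^{*}_{+}(x)-h^{*}_{-}(x),$$
so it suffices to prove $h^{*}_{+}(x)-h^{*}_{-}(x)=o(x)$.

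Next I would rephrase Proposition \ref{glownapropozycja} in terms of $h^{*}_{\pm}$ via the substitution $h=\theta x$. By definition $h^{*}_{+}(x)$ is the smallest positive root of $W_{x}$, so $h^{*}_{+}(x)=\theta x$ with $\theta$ the smallest positive root of equation (\ref{eq51}); likewise $h^{*}_{-}(x)=\theta x$ with $\theta$ the largest negative root. Fix an arbitrary $\alpha$ with $0<\alpha\leq 1$. Proposition \ref{glownapropozycja} provides an $x_{2}$ such that for every $x>x_{2}$ equation (\ref{eq51}) has a positive root $\theta_{+}\in(0,\alpha)$ and a negative root $\theta_{-}\in(-\alpha,0)$. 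Since $h^{*}_{+}$ selects the smallest positive root and $h^{*}_{-}$ the largest negative root, the selected roots are bounded on the correct sides by $\theta_{+}$ and $\theta_{-}$ respectively, giving
$$0<\frac{h^{*}_{+}(x)}{x}\leq \theta_{+}<\alpha, \qquad -\alpha<\theta_{-}\leq \frac{h^{*}_{-}(x)}{x}<0$$
for all $x>x_{2}$.

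Combining the two displays yields
$$0<\frac{H(x)}{x}<\frac{h^{*}_{+}(x)}{x}-\frac{h^{*}_{-}(x)}{x}<\alpha+\alpha=2\alpha$$
for all $x>x_{2}$. As $\alpha\in(0,1]$ was arbitrary, this forces $\lim_{x\to\infty}H(x)/x=0$, that is, $H(x)=o(x)$, which is the assertion of the proposition.

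I do not expect a genuine obstacle here: the whole quantitative content is already carried by Proposition \ref{glownapropozycja}, which says precisely that the rescaled roots $\theta_{\pm}=h^{*}_{\pm}(x)/x$ can be squeezed into an arbitrarily small neighbourhood of the origin for large $x$. The only place demanding a little care is the extremal selection in the definitions of $h^{*}_{+}$ (smallest positive root) and $h^{*}_{-}$ (largest negative root): one must verify that the particular root produced by Proposition \ref{glownapropozycja} bounds the \emph{selected} root on the side that makes the estimate go through, which is exactly what the second display above records.
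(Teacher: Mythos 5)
Your proposal is correct and follows essentially the same route as the paper: both reduce the claim to the auxiliary roots via the chain $h^{*}_{-}(x)<h_{-}(x)<0<h_{+}(x)<h^{*}_{+}(x)$ and then invoke Proposition \ref{glownapropozycja} with an arbitrary small $\alpha$ to squeeze $h^{*}_{\pm}(x)/x$ toward $0$. Your explicit handling of the root selection (that the \emph{smallest} positive root is bounded by \emph{any} positive root in $(0,\alpha)$, and dually for the negative side) is a slightly more careful version of what the paper asserts through its remark $\theta_{\pm}\cdot x=h^{*}_{\pm}(x)$, but it is the same argument in substance.
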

\vspace{3mm}

\begin{proof}
 This follows directly from the property formulated in
Proposition \ref{glownapropozycja}. Indeed, it is sufficient to
show separately,
 that $h_{+}(x)=o(x)$ and $|h_{-}(x)|=o(x)$. To prove the first
 relation, let us fix a positive number $\epsilon>0$. It follows
 from Proposition \ref{glownapropozycja} (setting $\alpha=\epsilon$) that there exists $M_1>M$,
 such that $x>M_1$ implies, that there exists a number $\theta<\epsilon$
($\theta$ depending on $x$) such that $h^{*}_{+}(x)=\theta \cdot
x$. But this means, that $$\frac{h^{*}_{+}(x)}{x}<\epsilon$$ for
$x>M_1$.  The proof for $h^{*}_{-}$ is similar.

\end{proof}
\vspace{3mm}

 Now we can prove a theorem on the order of magnitude of the length
of lenses S$_k$ using the Proposition \ref{Prop o male}. First we
shall prove the following lemma about sequences tending to
$+\infty$.

\vspace{3mm}

\begin{lemma}\label{ciagi}
 {\it  Suppose that we have four sequences
$(x^{-}_k)_1^{\infty}$,$(x^{+}_k)_1^{\infty}$,$(z_k)_1^{\infty}$,
and $(e_k)_1^{\infty}$ such that:

\be 0<x^{-}_k\leq e_k<e_{k+1}\leq x^{+}_k, \ee
\be x^{-}_k\leq  z_k \leq x^{+}_k, \ee
\be \lim_{k\rightarrow \infty} e_k=+\infty,\ee 
\be \lim_{k\rightarrow
\infty}\frac{x^{+}_k-x^{-}_k}{z_k}=0.\ee

Then $$\lim_{k\rightarrow \infty}\frac{e_{k+1}-e_k}{e_k} =0.$$}
\end{lemma}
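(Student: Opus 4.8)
The plan is to prove the statement by a squeeze argument, bounding the relative gap $\frac{e_{k+1}-e_k}{e_k}$ from above by a quantity that tends to $0$. First I would observe that the ordering $x^{-}_k\le e_k<e_{k+1}\le x^{+}_k$ immediately gives $0<e_{k+1}-e_k\le x^{+}_k-x^{-}_k$ and $e_k\ge x^{-}_k$, so that $0<\frac{e_{k+1}-e_k}{e_k}\le\frac{x^{+}_k-x^{-}_k}{e_k}$. The fourth hypothesis controls $\frac{x^{+}_k-x^{-}_k}{z_k}$ rather than $\frac{x^{+}_k-x^{-}_k}{e_k}$, so the task reduces to comparing $e_k$ with $z_k$; writing $\frac{x^{+}_k-x^{-}_k}{e_k}=\frac{x^{+}_k-x^{-}_k}{z_k}\cdot\frac{z_k}{e_k}$, it suffices to show that the factor $\frac{z_k}{e_k}$ stays bounded.

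The main step is therefore to show that $\frac{x^{+}_k}{x^{-}_k}$ is bounded, indeed that it tends to $1$. Since $0<z_k\le x^{+}_k$, I would estimate $\frac{x^{+}_k-x^{-}_k}{x^{+}_k}\le\frac{x^{+}_k-x^{-}_k}{z_k}\to 0$, whence $\frac{x^{-}_k}{x^{+}_k}=1-\frac{x^{+}_k-x^{-}_k}{x^{+}_k}\to 1$ and therefore $\frac{x^{+}_k}{x^{-}_k}\to 1$. In particular there is a constant $C$ (one may take $C=2$) with $\frac{x^{+}_k}{x^{-}_k}\le C$ for all sufficiently large $k$. Combining $z_k\le x^{+}_k$ with $e_k\ge x^{-}_k$ then yields $\frac{z_k}{e_k}\le\frac{x^{+}_k}{x^{-}_k}\le C$, which is exactly the bound needed.

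Putting the pieces together, for all large $k$ I obtain $0<\frac{e_{k+1}-e_k}{e_k}\le\frac{x^{+}_k-x^{-}_k}{z_k}\cdot\frac{z_k}{e_k}\le C\cdot\frac{x^{+}_k-x^{-}_k}{z_k}$, and the right-hand side tends to $0$ by hypothesis; the squeeze theorem then gives $\lim_{k\rightarrow\infty}\frac{e_{k+1}-e_k}{e_k}=0$. The only genuinely delicate point is the boundedness of $\frac{z_k}{e_k}$: the hypotheses do not bound $z_k$ and $e_k$ individually, but only locate both of them in the common interval $[x^{-}_k,x^{+}_k]$, so everything hinges on deducing $\frac{x^{+}_k}{x^{-}_k}\to 1$ from $\frac{x^{+}_k-x^{-}_k}{z_k}\to 0$ together with $z_k\le x^{+}_k$. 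I remark that the chain of estimates above does not actually use the hypothesis $\lim_{k\rightarrow\infty}e_k=+\infty$; it is the positivity $x^{-}_k>0$ and the nesting of the intervals that do the work.
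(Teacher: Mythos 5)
Your proof is correct and follows essentially the same route as the paper's: both hinge on deducing $\frac{x^{+}_k}{x^{-}_k}\to 1$ from the inequality $\frac{x^{+}_k-x^{-}_k}{x^{+}_k}\le\frac{x^{+}_k-x^{-}_k}{z_k}$ (valid since $0<z_k\le x^{+}_k$) and then squeezing $\frac{e_{k+1}-e_k}{e_k}$ by $\frac{x^{+}_k-x^{-}_k}{e_k}$; your factorization through the bounded ratio $\frac{z_k}{e_k}$ is only a cosmetic repackaging of the paper's switch of denominators from $x^{+}_k$ to $x^{-}_k$ to $e_k$. Your closing remark is also accurate: the paper opens by proving $x^{+}_k\to+\infty$ and $x^{-}_k\to+\infty$ using the hypothesis $e_k\to+\infty$, but those facts are never invoked in the subsequent chain of inequalities, so that hypothesis is indeed dispensable.
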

\vspace{3mm}

\begin{proof}

 From (60) and (62) we deduce that: $$\lim_{k\rightarrow \infty}x^{+}_k = +
\infty.$$ It must be also $$\lim_{k\rightarrow \infty}x^{-}_k = +
\infty.$$Indeed, suppose that there exists an infinite subset
$\mathbb L\subset \mathbb N$ and a constant $K>0$ such that $0\leq
x^{-}_n\leq K$ for $n\in \mathbb L$. Then for $n\in \mathbb L$ we
have:

$$0\leq \frac{x^{+}_n-K}{z_n}\leq\frac{x^{+}_n-x^{-}_n}{z_n}$$
Hence by (63) $$ \frac{x^{+}_n-K}{z_n}\rightarrow 0,n\in \mathbb
L.$$ This implies that $\lim_{n\in \mathbb L}z_n = +\infty$. In
consequence $$\lim_{n\in \mathbb L}\frac{x^{+}_n}{z_n} = 0,$$ thus
there exists $n\in \mathbb L$ such that $x^{+}_n<z_n$, but this is
impossible.

From the inequality $$\frac{x^{+}_k-x^{-}_k}{x^{+}_k}\leq
\frac{x^{+}_k-x^{-}_k}{z_k}$$ we deduce that
$$\lim_{k\rightarrow +\infty}\frac{x^{-}_k}{x^{+}_k}=1$$ and this gives
$$\lim_{k\rightarrow +\infty}\frac{x^{+}_k-x^{-}_k}{x^{-}_k}=0.$$
But
$$\frac{x^{+}_k-x^{-}_k}{e_k}\leq
\frac{x^{+}_k-x^{-}_k}{x^{-}_k}$$then
$$\lim_{k\rightarrow \infty}\frac{x^{+}_k-x^{-}_k}{e_k}=0.$$
Since$$\frac{e_{k+1}-e_k}{e_k}\leq \frac{x^{+}_{k}-x^{-}_k}{e_k}$$
then$$\lim_{k\rightarrow \infty}\frac{e_{k+1}-e_k}{e_k} =0,$$and
this ends the proof of  Lemma \ref{ciagi}.

\end{proof}
\vspace{3mm}
\begin{lemma}\label{graph}
{ \it The graph of the function $\pi^{*}$ lies between the graphs
of the functions $Li-\varepsilon$ and $Li+\varepsilon$}.
\end{lemma}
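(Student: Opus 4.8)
The plan is to deduce this two-sided bound from the Riemann Hypothesis, using an explicit form of the error term in the prime number theorem together with the elementary relation between $\pi$ and $\pi^{*}$. Concretely, the statement to be established is that for every $x\in[2,\infty)$
\[
Li(x)-\varepsilon(x)\le \pi^{*}(x)\le Li(x)+\varepsilon(x),
\]
i.e. $|\pi^{*}(x)-Li(x)|\le \sqrt{x}\,\ln x$, where $\varepsilon(x)=\sqrt{x}\,\ln x$.

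First I would recall Schoenfeld's explicit conditional estimate: assuming the Riemann Hypothesis, one has
\[
|\pi(x)-Li(x)|<\frac{1}{8\pi}\sqrt{x}\,\ln x\qquad\text{for all }x\ge 2657.
\]
This already gives the desired inequality with $\pi$ in place of $\pi^{*}$, and with a much smaller constant, namely $\frac{1}{8\pi}$ instead of $1$. The slack in this constant is exactly what will let me pass from $\pi$ to $\pi^{*}$.

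Next I would control the discrepancy between $\pi$ and $\pi^{*}$. By (\ref{wzor na pi*}) we have $\pi(x)=[\pi^{*}(x)]$, hence $0\le \pi^{*}(x)-\pi(x)<1$ for every $x$. Combining this with Schoenfeld's bound through the triangle inequality yields, for $x\ge 2657$,
\[
|\pi^{*}(x)-Li(x)|\le |\pi^{*}(x)-\pi(x)|+|\pi(x)-Li(x)|<1+\frac{1}{8\pi}\sqrt{x}\,\ln x.
\]
It then remains to absorb the additive $1$ into the main error term, i.e. to verify $1+\frac{1}{8\pi}\sqrt{x}\,\ln x\le \sqrt{x}\,\ln x$, equivalently $1\le\bigl(1-\frac{1}{8\pi}\bigr)\sqrt{x}\,\ln x$. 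Since $1-\frac{1}{8\pi}>0.96$, this holds comfortably for all $x\ge 2657$ (indeed already for quite small $x$), and the two-sided bound follows on $[2657,\infty)$.

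Finally, for the finitely many remaining arguments $2\le x<2657$ the inequality is a direct numerical verification: here $\varepsilon(x)=\sqrt{x}\,\ln x$ is already of moderate size while $\pi^{*}(x)$ and $Li(x)$ are both small and easily tabulated, so one simply checks $|\pi^{*}(x)-Li(x)|<\sqrt{x}\,\ln x$ on this bounded range. The only genuine input is the Riemann Hypothesis, entering through the explicit constant in Schoenfeld's bound; the only mild obstacle is the bookkeeping of the $\pm1$ gap between $\pi$ and $\pi^{*}$, which is harmless precisely because the conditional constant $\frac{1}{8\pi}$ lies far below $1$. Note that none of the concavity or Taylor-polynomial machinery of the preceding subsections is needed for this particular step.
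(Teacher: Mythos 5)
Your proof is essentially correct under RH, but it takes a genuinely different route from the paper's. The paper argues geometrically: it takes for granted the von Koch/RH bound $|\pi(x)-Li(x)|\le\varepsilon(x)$ for the step function $\pi$ at every real $x$ (this is what the author means by ``the definition of the error term''), and then transfers it to the interpolant $\pi^{*}$ with no loss in constants. For the upper curve: on $[p_n,p_{n+1}]$ the graph of $\pi^{*}$ is the chord joining $(p_n,n)$ and $(p_{n+1},n+1)$; both endpoints lie in the subgraph of $Li+\varepsilon$, which is a convex set because $Li+\varepsilon$ is concave, so the whole chord lies in it. For the lower curve: if $\pi^{*}(x)<Li(x)-\varepsilon(x)$ at some $x\in(p_n,p_{n+1})$, then since $\pi(x)=n\le\pi^{*}(x)$ the point $(x,\pi(x))$ lies below $Li-\varepsilon$ as well, contradicting the bound for $\pi$. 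Your proof instead buys the passage from $\pi$ to $\pi^{*}$ with constants: Schoenfeld's explicit $\frac{1}{8\pi}$ leaves enough slack to absorb the crude estimate $0\le\pi^{*}(x)-\pi(x)<1$. Both reductions are sound; yours makes the RH input explicit and quantitative (the paper's lemma, as stated, conceals it), while the paper's works even with the bare constant-$1$ bound $|\pi-Li|\le\varepsilon$ --- no slack in the constant, no explicit numerics, and no finite-range check needed.

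Two caveats on your version. First, Schoenfeld's inequality is stated for $li(x)=\int_0^x \frac{dt}{\ln t}$ (principal value), whereas the paper's $Li$ is $L(x)=\int_2^x \frac{dt}{\ln t}$; the discrepancy $li(2)\approx 1.045$ is a second additive constant you must also absorb --- harmless for $x\ge 2657$, but it should be stated. Second, your assertion that the range $2\le x<2657$ is ``a direct numerical verification'' fails as written under the paper's normalization: at $x=2$ one has $\pi^{*}(2)=1$ while $L(2)+\varepsilon(2)=\sqrt{2}\ln 2\approx 0.98$, so the claimed inequality is false there (and in a small neighbourhood of $2$). This defect is inherited from the lemma itself --- the paper's own proof has the same problem near $x=2$, since the point $(2,1)$ does not lie below $L+\varepsilon$ --- and it is immaterial for the asymptotic application in the main theorem, but you should either restrict the lemma to $x$ sufficiently large or work with the standard $li$, for which the small-$x$ check does go through.
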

\vspace{3mm}

\begin{proof}

 Suppose the opposite. Then there exist two consecutive prime
numbers $p_n$ and $p_{n+1}$, such that the points $A=(p_n,n)$ and
$B=(p_{n+1},n+1)$ lies between $Li-\varepsilon$ and
$Li+\varepsilon$ and the segment $[A;B]$ cuts the graph of
$Li-\varepsilon$ or $Li+\varepsilon$. But the subgraph of
$Li+\varepsilon$ is convex, then $[A;B]$ cuts only the graph of
$Li-\varepsilon$. This means, that there exists a point $x\in
(p_n,p_{n+1})$ such that the point $X=(x,n)$ lies below the graph
of $Li-\varepsilon$. But $X=(x,\pi(x))$, then from the definition
of the error term, $X$ lies between the graphs of $Li-\varepsilon$
and $Li+\varepsilon$. This ends the proof of Lemma \ref{graph}.
\end{proof}
\vspace{3mm}

\begin{lemma}\label{lemat29}
 {\it Let $S_k$ be a lens defined by the extremal prime numbers
$e_k$ and $e_{k+1}$. Then the straight line joining the points
$U=(e_k,\pi(e_k))$ and $V=(e_{k+1},\pi(e_{k+1}))$ cannot cut the
graph of $Li-\varepsilon$ in two distinct points.}
\end{lemma}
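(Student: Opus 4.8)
The plan is to show that the entire straight line $\ell$ through $U$ and $V$ stays weakly above the graph of $\varphi=Li-\varepsilon$ on all of $(2,\infty)$, and then to let the strict concavity of $\varphi$ furnished by Proposition \ref{wypuklosc} do the rest: a line lying above a strictly concave graph can meet it at most once.

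First I would record the chain of global inequalities $\ell(x)\ge\epsilon(x)\ge\pi^{*}(x)\ge\varphi(x)$ and locate where each is strict. The inequality $\epsilon\ge\pi^{*}$ holds because $\epsilon$ is by construction the least concave majorant of $\pi^{*}$, and $\pi^{*}\ge\varphi$ is exactly the lower half of Lemma \ref{graph}. For the relation between $\ell$ and $\epsilon$ I would use that $e_k$ and $e_{k+1}$ are \emph{consecutive} vertices of the piecewise-affine concave function $\epsilon$: on $[e_k,e_{k+1}]$ the graph of $\epsilon$ \emph{is} the segment $UV$, so $\ell=\epsilon$ there, while by concavity the prolongation of the secant stays above the graph of $\epsilon$ outside $[e_k,e_{k+1}]$. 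Thus $\ell(x)\ge\epsilon(x)\ge\varphi(x)$ for every $x$. Moreover this last bound is \emph{strict} for $x<e_k$: on each affine piece of $\epsilon$ to the left of $e_k$ the secant $\ell$ has slope $\delta_k$, which is strictly smaller than the slope $\delta_{k-1}>\delta_{k-2}>\dots$ of the pieces it meets (the sequence $(\delta_j)$ is strictly decreasing), so the prolonged secant runs strictly above $\epsilon$, giving $\ell(x)>\epsilon(x)\ge\varphi(x)$ for all $x<e_k$.

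With this in hand the conclusion is immediate in the essential range. Any point where $\ell$ meets the graph of $\varphi$ is, by the global inequality, a point where $\ell$ touches that graph from above; by the strictness just noted no such contact can occur for $x<e_k$, so every intersection satisfies $x\ge e_k$. For all sufficiently large $k$ we have $e_k\ge x_o$, and on $[x_o,\infty)$ the function $\varphi$ is strictly concave, since its second derivative $\frac{-4\sqrt{x}+y^{3}}{4x\sqrt{x}\,y^{2}}$ is strictly negative there. If $\ell$ met the graph of $\varphi$ at two distinct points $a<b$ (necessarily with $e_k\le a<b$), then $\varphi-\ell$ would be strictly concave on $[a,b]$ and vanish at both endpoints, forcing $\varphi>\ell$ on $(a,b)$ and contradicting $\ell\ge\varphi$. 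Hence $\ell$ cannot cut the graph of $Li-\varepsilon$ in two distinct points.

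The step needing the most care is precisely the passage from ``$x\ge e_k$'' to the strictly concave regime: $\varphi$ is not concave on the whole of $(2,x_o)$, so the argument above is clean only once $e_k\ge x_o$. I expect this to be the main obstacle, and I would handle it by separating the finitely many small indices $k$ with $e_k<x_o$ (for which the assertion can be verified directly from the tabulated data, the lens lying in a bounded region) from the generic large-$k$ case treated above, where the strict left-of-$e_k$ inequality confines every contact point to the region of strict concavity. Since the lemma is used only for the asymptotic analysis of the lengths $|S_k|$ as $k\to\infty$, this split is harmless.
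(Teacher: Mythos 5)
Your opening chain $\ell\ge\epsilon\ge\pi^{*}\ge Li-\varepsilon$ is, in substance, the paper's entire proof: the author observes that by the definition of extremal points the whole graph of $\pi^{*}$ lies below the line through $U$ and $V$, and then cites Lemma \ref{graph} to place $Li-\varepsilon$ below $\pi^{*}$. Under the paper's reading of ``cut'' as ``cross'', nothing further is required: a line lying weakly above the graph of $Li-\varepsilon$ on all of $[2,\infty)$ can never cross it --- at two points or even at one --- since a crossing would force the graph strictly above the line somewhere; and this non-crossing statement is exactly what the main theorem uses (it makes the tangent parallel line $l^{*}$ a \emph{downward} translate of $l(U,V)$, whence $x^{-}_k\le e_k<e_{k+1}\le x^{+}_k$ and Lemma \ref{ciagi} applies). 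Your additional argument via Proposition \ref{wypuklosc} proves something stronger --- that the line cannot even \emph{touch} the graph in two distinct points --- and it is sound where it applies: the strict inequality $\ell>\epsilon$ to the left of $e_k$ coming from the decreasing slopes $\delta_j$, and the ``strictly concave difference vanishing at two points'' contradiction, are both correct. But this stronger reading is also where your proof has its one genuine loose end: for the finitely many $k$ with $e_k<x_o$ a contact point could lie in $[e_k,x_o)$, where $\varphi$ need not be concave, and your plan to settle those cases ``directly from the tabulated data'' is not executable as stated --- the tables list extremal primes, not values of $Li-\varepsilon$, so this remains an unperformed (albeit finite) computation. In short: your core coincides with the paper's one-line proof; the concavity layer is an instructive refinement, unnecessary for the lemma as the paper uses it, and it is precisely there that your exceptional cases are left unverified.
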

\vspace{3mm}

\begin{proof}

 This follows from the Lemma \ref{graph} since, by the definition of
extremal points, all the graph of $\pi^{*}$ lies below the
straight line joining the points $U$ and $V$.
\end{proof}
\vspace{3mm}

The main theorem of this section is the following:

\vspace{3mm}
\begin{theorem}
{\it With the notations as above if the Riemann Conjecture is
true, then $$\lim_{k\rightarrow +\infty}\frac{e_{k+1}}{e_k} =
1.$$}
\end{theorem}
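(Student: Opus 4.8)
The plan is to reduce the theorem to the elementary sequence Lemma \ref{ciagi} by trapping each lens $[e_k,e_{k+1}]$ inside an interval whose length is small relative to its position. Concretely, for every large $k$ I would produce a point $x_k\to\infty$ together with the numbers $h_{-}(x_k)<0<h_{+}(x_k)$ introduced above, such that
$$x_k+h_{-}(x_k)\le e_k<e_{k+1}\le x_k+h_{+}(x_k).$$
Once this bracketing is available I set $x^{-}_k=x_k+h_{-}(x_k)$, $x^{+}_k=x_k+h_{+}(x_k)$ and $z_k=x_k$. Then $x^{+}_k-x^{-}_k=H(x_k)=h_{+}(x_k)-h_{-}(x_k)$, which is $o(x_k)=o(z_k)$ by Proposition \ref{Prop o male}, while $z_k=x_k\to\infty$; Lemma \ref{ciagi} then gives $\frac{e_{k+1}-e_k}{e_k}\to 0$, i.e. $\frac{e_{k+1}}{e_k}\to 1$. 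Thus the whole theorem rests on the bracketing, and the role of the Riemann Hypothesis is confined to the sandwiching $L-\varepsilon\le\pi^{*}\le L+\varepsilon$ of Lemma \ref{graph}.

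For the bracketing I first record that the chord $l_k$ joining $U=(e_k,\pi(e_k))$ and $V=(e_{k+1},\pi(e_{k+1}))$ dominates $\varphi=L-\varepsilon$ on the whole line. Indeed $l_k$ is the prolongation of one linear piece of the concave polygon $\epsilon$, so $l_k\ge\epsilon\ge\pi^{*}\ge L-\varepsilon=\varphi$ everywhere, the last inequality being Lemma \ref{graph} (this is the global form of Lemma \ref{lemat29}). Next I choose $x_k$ to be the unique point with $\varphi'(x_k)=\delta_k$, where $\delta_k$ is the slope of $l_k$ from (\ref{definicja delta k}); since $\varphi$ is strictly concave on $[x_o,\infty)$ by Proposition \ref{wypuklosc}, its derivative decreases to $0$, and because $\delta_k\to 0$ by Proposition \ref{pochodna pi-e}, such an $x_k$ exists for all large $k$ and satisfies $x_k\to\infty$. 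Let $T_{x_k}$ denote the tangent to $\varphi$ at $x_k$. It is parallel to $l_k$, so $l_k-T_{x_k}$ is constant; evaluating it at $x_k$ gives $l_k(x_k)-\varphi(x_k)\ge 0$, whence $l_k\ge T_{x_k}$ identically.

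Now the bracketing is exactly the defining property of $h_{+},h_{-}$. The tangent $T_{x_k}$ is strictly below $L+\varepsilon$ at its point of tangency and, by the very construction of $h_{\pm}$, meets the strictly concave curve $L+\varepsilon$ precisely at the abscissae $x_k+h_{-}(x_k)$ and $x_k+h_{+}(x_k)$; hence $(L+\varepsilon)-T_{x_k}$ is concave and nonnegative exactly on $[x_k+h_{-}(x_k),\,x_k+h_{+}(x_k)]$. By the Riemann Hypothesis, through Lemma \ref{graph}, we have $\pi(e_k)\le(L+\varepsilon)(e_k)$ and $\pi(e_{k+1})\le(L+\varepsilon)(e_{k+1})$, while $T_{x_k}(e_k)\le l_k(e_k)=\pi(e_k)$ and $T_{x_k}(e_{k+1})\le l_k(e_{k+1})=\pi(e_{k+1})$. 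Therefore $T_{x_k}\le L+\varepsilon$ at both $e_k$ and $e_{k+1}$, so both abscissae lie in $[x_k+h_{-}(x_k),\,x_k+h_{+}(x_k)]$, which is the required trap.

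I expect the third paragraph to be the crux, and within it the single geometric idea that makes everything work: replacing the supporting chord $l_k$ by the parallel tangent $T_{x_k}$ to $\varphi$. The force of the Riemann Hypothesis is spent entirely on pushing the endpoints $U,V$ below $L+\varepsilon$; it is the inequality $l_k\ge T_{x_k}$ that converts this one-sided information into the two-sided trap $[x_k+h_{-},x_k+h_{+}]$, whose width is already controlled by Proposition \ref{Prop o male}. The remaining items — the existence and divergence of $x_k$, the positivity of $x^{-}_k$ for large $k$ (clear since $h_{-}(x_k)/x_k\to 0$), and the verification of the four hypotheses of Lemma \ref{ciagi} — are routine once the trap is in place.
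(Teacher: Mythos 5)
Your proposal is correct and is essentially the paper's own argument: the paper likewise takes the chord through $U=(e_k,\pi(e_k))$, $V=(e_{k+1},\pi(e_{k+1}))$, translates it to the parallel tangent of $Li-\varepsilon$ (your $T_{x_k}$, the paper's $l^*$ with tangent point $z_k$), takes its intersections with $Li+\varepsilon$ as $x^{\pm}_k$, and concludes via Proposition \ref{Prop o male} and Lemma \ref{ciagi}. Your third paragraph simply spells out the bracketing $x^-_k\le e_k<e_{k+1}\le x^+_k$ that the paper leaves as ``not hard to check,'' so there is no substantive difference in route.
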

\vspace{3mm}

\begin{proof}
Let $U$ and $V$ be as in Lemma \ref{lemat29}. Take the straight
line $l(U,V)$ joining $U$ and $V$  and translate it to the
position $l^{*}$ where the straight line $l^{*}$ is parallel to
$l(U,V)$ and tangent to the graph of $Li-\varepsilon$. This line
$l^{*}$ cuts the graph of $Li+\varepsilon$ in points $U^{*}$ and
$V^{*}$, whose first coordinates are $x^{-}_k$ and $x^{+}_k$
respectively, and the tangent point is $z_k$. It is not hard to
check, that the sequences
$(x^{-}_k)_1^{\infty}$,$(x^{+}_k)_1^{\infty}$,$(z_k)_1^{\infty}$,
and $(e_k)_1^{\infty}$ satisfy the assumptions of  Lemma
\ref{ciagi}. Then this ends the proof of the theorem.

\end{proof}
\vspace{3mm}
 We have an equivalent formulation.

\vspace{3mm}

\begin{corollary}
 { \it The length of lenses $x\rightarrow
 S(x)$ satisfies the equality $S(x)=o(x)$.}

\end{corollary}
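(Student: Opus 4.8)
The plan is to feed the purely combinatorial Lemma \ref{ciagi} with four sequences built from the geometry of a single lens, the only analytic input being Proposition \ref{Prop o male}. The Riemann hypothesis is used exactly once, through Lemma \ref{graph}: it confines the graph of $\pi^{*}$ to the band $Li-\varepsilon\le \pi^{*}\le Li+\varepsilon$. Fix a large index $k$ and set $U=(e_k,\pi(e_k))$ and $V=(e_{k+1},\pi(e_{k+1}))$; the chord through $U,V$, call it $l(U,V)$, has slope $\delta_k$ and is an edge of the concave graph $\epsilon$, so the line carrying it is a supporting line of $\epsilon$ and therefore lies weakly above $\epsilon\ge \pi^{*}\ge Li-\varepsilon$ on all of $[2,\infty)$.

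Next I would slide $l(U,V)$ parallel to itself onto the supporting line $l^{*}$ of the concave curve $Li-\varepsilon$ (concavity for large arguments is Proposition \ref{wypuklosc}); let $z_k$ be the abscissa of the contact point, so that $(Li-\varepsilon)'(z_k)=\delta_k$. Since $l^{*}$ touches the lower boundary at $z_k$, where it sits a distance $2\varepsilon(z_k)>0$ below the upper boundary, it meets $Li+\varepsilon$ in exactly two points straddling $z_k$; their abscissae I call $x^{-}_k<x^{+}_k$. The decisive observation is that $l^{*}$ is nothing but the tangent line to $\varphi=Li-\varepsilon$ at $z_k$, so by the very definitions of $h_{-}$ and $h_{+}$ one has $x^{-}_k=z_k+h_{-}(z_k)$ and $x^{+}_k=z_k+h_{+}(z_k)$, whence $x^{+}_k-x^{-}_k=H(z_k)$.

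Then I would verify the four hypotheses of Lemma \ref{ciagi}. The hypothesis $e_k\to\infty$ is clear, and the contact abscissa $z_k$ obviously lies in $[x^{-}_k,x^{+}_k]$, giving the middle inclusion. Because $\delta_k\to 0$ (Proposition \ref{pochodna pi-e}) and $(Li-\varepsilon)'$ eventually decreases to $0$, we have $z_k\to+\infty$, so Proposition \ref{Prop o male} yields $\frac{x^{+}_k-x^{-}_k}{z_k}=\frac{H(z_k)}{z_k}\to 0$, which is the key ratio hypothesis. Finally, for the containment $x^{-}_k\le e_k<e_{k+1}\le x^{+}_k$: since $l(U,V)\ge l^{*}$ (both lines have slope $\delta_k$, while $l(U,V)$ lies above $Li-\varepsilon$ and $l^{*}$ is the lowest such line), at the endpoint we get $l^{*}(e_k)\le l(U,V)(e_k)=\pi(e_k)\le (Li+\varepsilon)(e_k)$; and as $(Li+\varepsilon)-l^{*}$ is concave and vanishes only at $x^{-}_k,x^{+}_k$, it is nonnegative precisely on $[x^{-}_k,x^{+}_k]$, so $e_k$ (and, identically, $e_{k+1}$) lies there.

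With every hypothesis in place, Lemma \ref{ciagi} gives $\lim_k\frac{e_{k+1}-e_k}{e_k}=0$, i.e. $\lim_k\frac{e_{k+1}}{e_k}=1$. I expect the main obstacle to be the bookkeeping that translates the geometric picture into the analytic functions $h_{\pm}$ --- concretely, the identification $x^{+}_k-x^{-}_k=H(z_k)$ together with the correct direction of the translation, since placing $l(U,V)$ on the wrong side of $l^{*}$ would destroy the containment. Once that is pinned down by the supporting-line remark (a sharpening of Lemma \ref{lemat29}), the analytic heavy lifting is entirely absorbed into Proposition \ref{Prop o male}, and all that remains is the elementary limit manipulation already carried out in Lemma \ref{ciagi}.
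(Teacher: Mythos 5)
Your proof is correct and follows exactly the route of the paper's own argument for Theorem 30, of which this corollary is just a reformulation (since $S(x)=e_{k+1}-e_k$ for $x\in[e_k,e_{k+1})$): translate the chord $l(U,V)$ to the tangent position $l^{*}$ on $Li-\varepsilon$, identify $x^{-}_k=z_k+h_{-}(z_k)$ and $x^{+}_k=z_k+h_{+}(z_k)$ so that $x^{+}_k-x^{-}_k=H(z_k)$, and feed the four sequences into Lemma \ref{ciagi} together with Proposition \ref{Prop o male}. In fact you correctly supply the verifications the paper compresses into ``it is not hard to check'' --- the supporting-line comparison $l^{*}\le l(U,V)$, the containment $x^{-}_k\le e_k<e_{k+1}\le x^{+}_k$ via concavity of $(Li+\varepsilon)-l^{*}$, and $z_k\to\infty$ from $\delta_k\to 0$.
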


\section{Part III}

\subsection{Final remarks}

It is natural to ask if one can prove the results like  Theorem 30
or Corollary 31 without assuming the Riemann Hypothesis. Maybe
this is possible, but it seems, that the method used in this paper
is insufficient. In particular an analogous argumentation applied
to $L(x)=\frac{x}{\ln x}$ and $\varepsilon(x)=
C\cdot\frac{x}{\ln^{2}x}$ gives only $S(x)=O(x)$. I was also not
able to prove  Theorem 30 using $L(x)= Li(x)$ and
$$\varepsilon(x)=O\left(x\cdot \exp\left(\frac{A(\ln x)^{\frac{3}{5}}}{(\ln(\ln
x))^{\frac{1}{5}}}\right)\right).$$ On the other hand for
$L(x)=Li(x)$ the error term $\varepsilon(x)=O(x^{\alpha}\cdot
(\ln^{k}x))$ ( $\alpha
>\frac{1}{2}$ and $k\in \mathbb Z$) is sufficient.

If one assumes the Riemann hypothesis, then some naive
argumentation leads to the equality like $S(x)= O(\sqrt{x}
\ln^{2}x)$, which seems to be supported by the experimental data.
This may suggest, that the problem of determining the right order
of magnitude of $S(x)$ at infinity is near to the problem of
determining the right order of magnitude of the difference
$|Li(x)-\pi(x)|$.

I have no idea about "the small gaps between extremal primes". As
it was mentioned in Part I, Question \ref{pytanie}, the small gaps
between extremal primes -i.e. the small $S_k$- may occur, but the
theorems like for example $$\liminf\frac{e_{k+1}-e_k}{\ln e_k}=0$$
or at least $$\liminf\frac{e_{k+1}-e_k}{\sqrt{e_k}}=0$$ seems to
be out of reach.

As it was mentioned in Introduction, Montgomery and Wagon in
\cite{MonWa} considered the function $M(x)=x\rightarrow
\frac{x}{\pi(x)}$. I used an analogous algorithm as in Proposition
\ref{rekurencja} to obtain about 1500 "another"  extremal prime
numbers, $(m_k)_1^{\infty}$  "generated" by the function $M(x)$
instead of $\pi(x)$. Generated by $M(x)$ means, that the points
$(m_k,M(m_k))$ are extremal points of the convex hull of the
subgraph of the function $M(x)$. Clearly $(m_k)_1^{\infty}$ and
$(e_k)_1^{\infty}$ are not the same sequences, there are many
differences, but on the other hand they behave (in asymptotic
sense) similarly.

  \begin{figure}[h]
      \begin{center}
        \scalebox{0.6}{\includegraphics{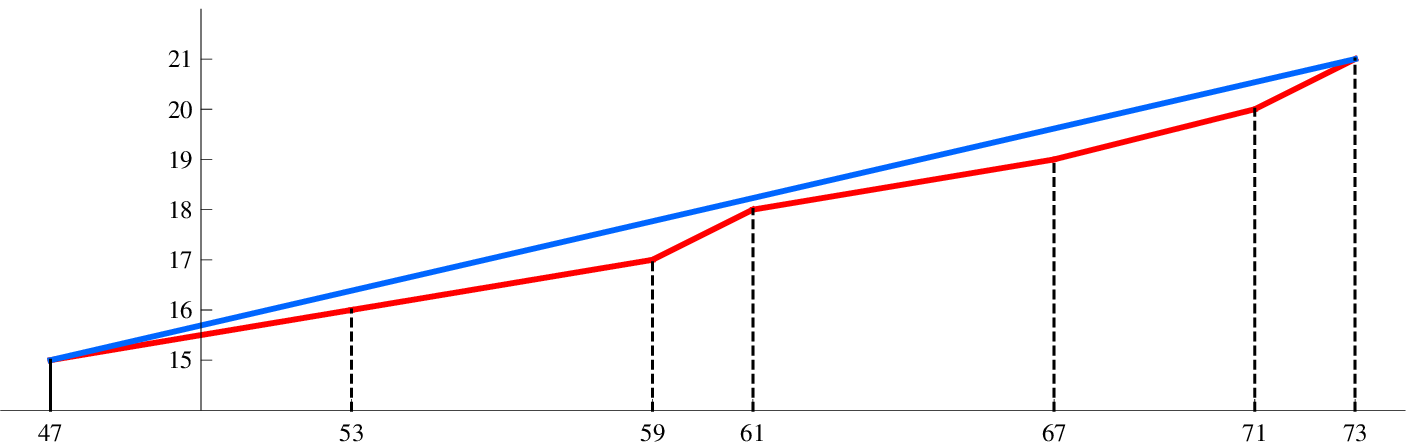}}
      \end{center}
      \end{figure}

  \begin{figure}[h]
      \begin{center}
        \scalebox{0.5}{\includegraphics{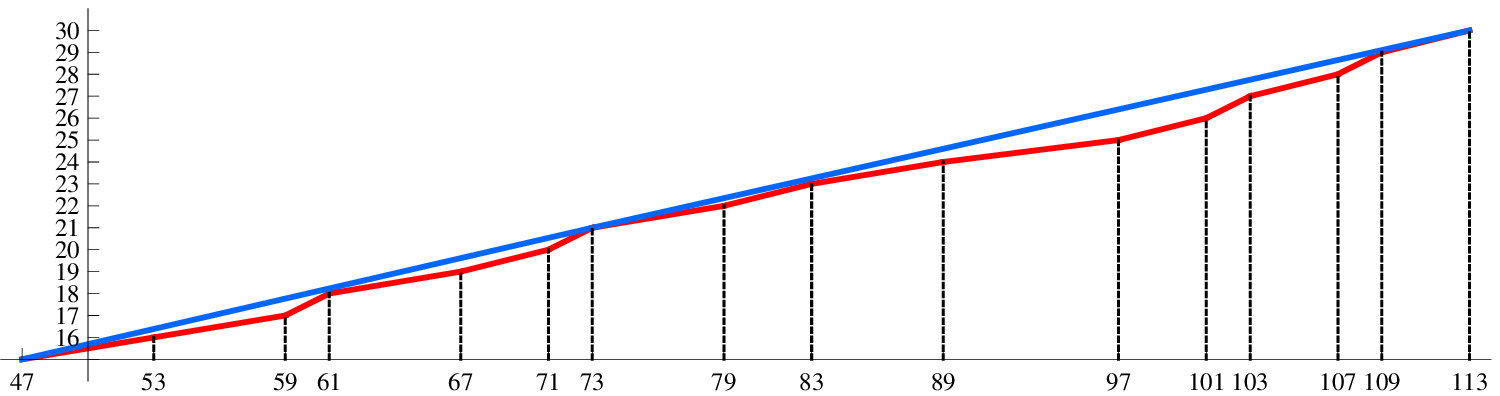}}
      \end{center}
      \end{figure}

         \begin{figure}[h]
      \begin{center}
        \scalebox{0.6}{\includegraphics{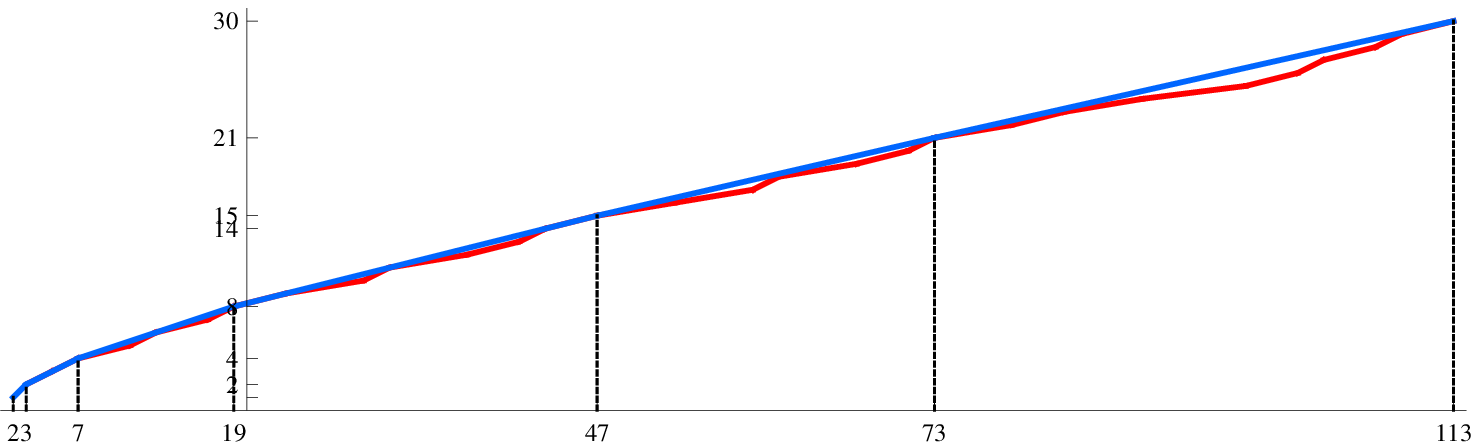}}
      \end{center}
      \end{figure}


\begin{thebibliography}{99}\footnotesize

\bibitem{Pomm}
 Pommerance, Carl.: The Prime Number Graph, Mathematics of
 Computations,Volume 33,  145, January 1979, pages 399-408.



\bibitem{MonWa}
 Montgomery Hugh L., Wagon, Stan: A Heuristic for the Prime Number
 Theorem, The Mathematical Intelligencer, 28:3, 2006, pages 6-9


\bibitem{Zhang}
Zhang, Yitang: Bounded Gaps Between Primes, Annals of Mathematics,
Volume 179, 2014,  3, pages 1121-1174.

\end{thebibliography}
\end{document}